\documentclass[letterpaper,11pt]{amsart}
\usepackage[margin=1.2in]{geometry}
\usepackage{amsmath,amsthm,amssymb}
\usepackage{xspace,xcolor}
\usepackage[breaklinks,colorlinks,citecolor=teal,linkcolor=teal,urlcolor=teal,pagebackref,hyperindex]{hyperref}
\usepackage[alphabetic]{amsrefs}
\usepackage[all]{xy}
\usepackage[english]{babel}
\usepackage{enumitem}
\usepackage{tikz,xcolor}
\usepackage{tikz-cd}
\usepackage{mathrsfs}
\usepackage{color}
\usepackage{comment}

\newcounter{intro}

\newtheorem{intro-conjecture}[intro]{Conjecture}
\newtheorem{intro-corollary}[intro]{Corollary}
\newtheorem{intro-theorem}[intro]{Theorem}

\newcommand{\theoremref}[1]{\hyperref[#1]{Theorem~\ref*{#1}}}
\newcommand{\sectionref}[1]{\hyperref[#1]{Section~\ref*{#1}}}
\newcommand{\lemmaref}[1]{\hyperref[#1]{Lemma~\ref*{#1}}}
\newcommand{\definitionref}[1]{\hyperref[#1]{Definition~\ref*{#1}}}
\newcommand{\propositionref}[1]{\hyperref[#1]{Proposition~\ref*{#1}}}
\newcommand{\conjectureref}[1]{\hyperref[#1]{Conjecture~\ref*{#1}}}
\newcommand{\corollaryref}[1]{\hyperref[#1]{Corollary~\ref*{#1}}}
\newcommand{\exampleref}[1]{\hyperref[#1]{Example~\ref*{#1}}}
\newcommand{\remarkref}[1]{\hyperref[#1]{Remark~\ref*{#1}}}
\newcommand{\itemref}[1]{\hyperref[#1]{Item~\ref*{#1}}}
\newcommand{\equationref}[1]{\hyperref[#1]{Equation~(\ref*{#1})}}
\newcommand{\formularef}[1]{\hyperref[#1]{Formula~(\ref*{#1})}}
\newcommand{\conditionref}[1]{\hyperref[#1]{Condition~(\ref*{#1})}}

\theoremstyle{plain}
\newtheorem{thm}{Theorem}[section]

\newtheorem{lem}[thm]{Lemma}
\newtheorem{prop}[thm]{Proposition}
\newtheorem{cor}[thm]{Corollary}

\theoremstyle{definition}
\newtheorem{defi}[thm]{Definition}

\newtheorem{eg}[thm]{Example}

\theoremstyle{remark}
\newtheorem{rmk}[thm]{Remark}

\def\Z{{\mathbf Z}}
\def\Q{{\mathbf Q}}
\def\R{{\mathbf R}}
\def\C{{\mathbf C}}

\def\cD{\mathcal{D}}
\def\cE{\mathcal{E}}

\def\cH{\mathcal{H}}

\def\cK{\mathcal{K}}

\def\cM{\mathcal{M}}

\def\cO{\mathcal{O}}

\def\cT{\mathcal{T}}

\def\.{\cdot}
\def\^{\widehat}

\def\({\left(}
\def\){\right)}

\renewcommand{\and}{ \ \ \text{ and } \ \ }

\DeclareMathOperator{\Gr} {Gr}
\DeclareMathOperator{\DR} {DR}

\subjclass[2020]{14B15, 14B05, 14J17}

\title[Characterization and finite descent of local cohomological invariants]{Characterization and finite descent of local cohomological invariants}

\author[B.~Dirks]{Bradley Dirks}

\address{Department of Mathematics, Stony Brook University, Stony Brook, NY 11794-3651, USA}

\email{bradley.dirks@stonybrook.edu}

\author[S.~Olano]{Sebasti\'{a}n Olano}

\address{Department of Mathematics, University of Toronto, 40 St. George St., Toronto, Ontario Canada, M5S 2E4}

\email{seolano@math.toronto.edu}

\author[D.~Raychaudhury]{Debaditya Raychaudhury}

\address{Department of Mathematics, University of Arizona, 617 N. Santa Rita Ave., Tucson, AZ 85721, USA}

\email{draychaudhury@math.arizona.edu}

\begin{document}

\thanks{B.D. was partially supported by the National Science Foundation under grant NSF-MSPRF grant DMS-2303070. D.R. was partially supported by an AMS-Simons travel grant.}

\begin{abstract} 
We provide simple ``left-inverse characterizations'' of the singularity invariants $c(Z)$, $w(Z)$, and ${\rm HRH}(Z)$ of equidimensional varieties $Z$ introduced in \cites{CDOIsolated, CDOR, DOR} respectively. Combining this with a trace morphism, we establish descent results of these invariants for finite surjective morphisms.
\end{abstract}

\maketitle

\section{Introduction} Let $X$ be a complex algebraic variety (meaning separated but not necessarily irreducible). Hodge theory has long been a key player in the study of singularities of $X$, and recently many singularity invariants have been defined using Saito's theory of mixed Hodge modules that extend the classically studied notions of Du Bois and rational singularities. 

In particular, the theory of higher Du Bois and higher rational singularities have been introduced in \cites{MOPW, JKSY}, extended through a ``pre''-variant in \cite{SVV}, and Hodge-theoretic singularity invariants $c(X),w(X)$ and ${\rm HRH}(X)$ emerged through \cites{CDOIsolated, CDOR, DOR}. These latter three invariants are intimately related to higher singularities; for example, normal varieties with pre $k$-Du Bois singularities are pre $k$-rational if and only if their ${\rm HRH}$ level is $\geq k$. Moreover, these three invariants are related by:
$${\rm HRH}(X) = \min\{c(X), w(X)\}.$$

In singularity theory, it is often useful to find a simple characterization of an invariant by means of the existence of left-inverse of appropriate maps. To highlight these, recall that $X$ has Du Bois singularities if and only if the map $\cO_X\to\underline{\Omega}_X^0$ has a left inverse (we provide a proof of this for completeness, see \corollaryref{cor-DuBoisLeftInv} below). The first appearance of this criterion is the work of Kov\'acs \cite{Kovacs}*{Thm. 2.3}. It has since found significant applications in birational geometry (see, for example, \cite{KK}) and inspired many other such results.

A ``left-inverse'' criterion is most often shown by establishing an injectivity theorem. A remarkable fact is that, for Du Bois singularities, there is such an injectivity theorem, first proven in the influential \cite{KS}*{Thm. 3.3} (see another proof \cite{MPLocCoh}*{Cor. B}). Similar injectivity statements for higher singularities have been established in \cites{MPLocCoh, PSV, Kov}.

Our first main result consists of a collection of injectivity statements that yield left-inverse characterizations of the invariants $c(X)$, $w(X)$, and ${\rm HRH}(X)$. Recall that to a variety $X$ one associates the filtered Du Bois complex $(\underline{\Omega}_X^{\bullet},F)$, whose (shifted) graded pieces are denoted by $\underline{\Omega}_X^k$. By a result of Saito \cite{SaitoMHC}, these complexes are obtained, up to shift, by applying the functor $\Gr^F_{-k}\DR$ to the trivial Hodge module $\Q_X^H[\dim X]$. More recently, the \textit{intersection Du Bois complex} $I\underline{\Omega}_X^k$ was introduced in \cites{PSV, PPLefschetz}; it is defined by applying the same functor $\Gr^F_{-k}\DR$ to the intersection complex Hodge module ${\rm IC}_X^H$, and it comes equipped with a natural morphism from $\underline{\Omega}_X^k$. We introduce a third complex, the \textit{perverse Du Bois complex} $P\underline{\Omega}_X^k$, which factors this morphism. It is defined by applying the same functor to $H^0\Q_X^H[\dim X]$, which corresponds to the perverse sheaf $^{\mathfrak{p}}\cH^0(\Q_X[\dim X])$. By definition, the morphism $\underline{\Omega}_X^k \to {\rm I}\underline{\Omega}_X^k$ factors through $P \underline{\Omega}_X^k$. 

\begin{intro-theorem}\label{thm-inj}  Let $X$ be a variety of pure dimension $n$.
\begin{enumerate}
    \item \label{lem-injLem3} Assume $c(X) \geq k-1$. Then the natural morphism
\[  \cE xt^{i}(P\underline{\Omega}_X^k,\omega_X^\bullet) \to \cE xt^{i}(\underline{\Omega}_X^k,\omega_X^\bullet)\]
is injective on cohomology for all $i$.
\item \label{lem-injLem2} Assume $w(X) \geq k-1$. Then the natural morphism
\[ \cE xt^{i}({\rm I}\underline{\Omega}_X^k,\omega_X^\bullet) \to \cE xt^{i}(P\underline{\Omega}_X^k,\omega_X^\bullet)\]
is an isomorphism for $i < k-n$, injective for $i=k-n$ and both the domain and codomain vanish for $i > k-n$.
\end{enumerate}
In particular, if ${\rm HRH}(X)\geq k-1$, then the natural morphism 
\[ \cE xt^{i}({\rm I}\underline{\Omega}_X^k,\omega_X^\bullet) \to \cE xt^{i}(\underline{\Omega}_X^k,\omega_X^\bullet)\]
is an isomorphism for $i < k-n$, injective for $i=k-n$ and both the domain and codomain vanish for $i > k-n$.
\end{intro-theorem}

The last assertion in the above result was proven by S.G. Park in \cite{PSV}*{Thm. 10.5}. Our proof mirrors that of \emph{loc. cit.}, and in fact the result should more appropriately be called a corollary of Park's proof.

As mentioned before, the above leads to the following:

\begin{intro-corollary}\label{cor-leftinv}
Let $X$ be equidimensional.

\begin{enumerate}
\item $c(X) \geq k$ if and only if,  for all $0 \leq p \leq k$, the natural morphism
\[ \underline{\Omega}_X^p \to P\underline{\Omega}_X^p\]
has a left inverse.
\item $w(X) \geq k$ if and only if, for all $0 \leq p \leq k$, the natural morphism
\[ P\underline{\Omega}_X^p \to {\rm I}\underline{\Omega}_X^p\]
has a left inverse.
    \item ${\rm HRH}(X) \geq k$ if and only if, for all $0\leq p  \leq k$, the natural morphism
\[ \underline{\Omega}_X^p \to {\rm I}\underline{\Omega}_X^p\]
has a left inverse.
\end{enumerate}
\end{intro-corollary}

Similar criteria for higher singularities are established in \cites{ADV, KLV}. 

Another phenomenon in singularity theory is that certain classes of singularities descend under finite surjective morphisms. For example, if $Y$ is a $\Q$-homology manifold (meaning $\Q_Y[\dim Y]$ is a semi-simple perverse sheaf) and $X$ is normal, then $X$ is also a $\Q$-homology manifold. This is most clear in the case $Y$ is smooth and $f\colon Y \to X$ is the quotient by the action of a finite group: then $X$ is a so-called ``$V$-manifold'', and known to be a $\Q$-homology manifold. Of course, this example also shows that smoothness doesn't descend.

For Hodge theoretic classes of singularities, like Du Bois or rational singularities, there are other well-known descent results. The established strategy to obtain these results is to first prove a ``left-inverse'' criterion for the singularity class and use an appropriate trace morphism (if it exists) to show that the left-inverses descend. 

We follow the same strategy to obtain descent results of the three singularity invariants. The main difficulty is that the trace morphism must be constructed for more general objects than the ones considered in \cite{Kim}. The following is formal from properties of Saito's theory of mixed Hodge modules:

\begin{intro-theorem} \label{thm-MHMTrace} Let $Y$ be equidimensional and let $f\colon Y \to X$ be a morphism that is
\begin{itemize}
    \item either the quotient by the action of a finite group $G$, or
    \item a finite surjective morphism, with $X$ normal.
\end{itemize}
Then the natural morphism
\[ \Q_X^H \to f_* \Q_Y^H\]
admits a splitting \emph{(here ``Tr'' stands for ``trace'')}
\[ {\rm Tr}_f \colon f_* \Q_Y^H \to \Q_X^H\]
in $D^b({\rm MHM}(X))$.
\end{intro-theorem}

This recovers the trace morphism of \cite{Kim} and \cite{DuBois}*{(5.11.2)}, by applying ${\rm Gr}^F_{-k}{\rm DR}(-)$. We will see that, in fact, the statement holds in any theory of mixed sheaves \cite{SaitoMixedSheaves}, which highlights the formal nature of the argument. We note that this morphism has already essentially been established in \cite{EquivCharClass}*{Lem. 5.3}.

This leads to the following:

\begin{intro-corollary}\label{cor-descent}
Let $Y$ be equidimensional and let $f\colon Y \to X$ be a morphism that is
\begin{itemize}
    \item either the quotient by the action of a finite group $G$, or
    \item a finite surjective morphism, with $X$ normal.
\end{itemize}
Then we have the following inequalities:
\begin{enumerate} 
\item $c(Y)\leq c(X)$.
\item $w(Y)\leq w(X)$.
\item ${\rm HRH}(Y)\leq {\rm HRH}(X)$.
\end{enumerate}
\end{intro-corollary}

\begin{rmk} Our trace map also recovers the inequality ${\rm lcdef}(X) \leq {\rm lcdef}(Y)$, which was shown in \cite{Kim}*{} using the characterization of ${\rm lcdef}(-)$ in terms of the depth of Du Bois complexes \cite{MPLocCoh}*{Cor. 12.6} (see \cite{PSV}*{Cor. 4.3}).

\corollaryref{cor-descent} shows that pre-$k$-rational singularities descend (a result that was proven in \cites{Kim, KLV}) using the fact that pre-$k$-Du Bois singularities descend \cite{SVV}*{Prop. 4.2} and the equivalence
\[ \text{pre-}k\text{ rational} \iff \text{pre-}k\text{ Du Bois and } {\rm HRH}(X) \geq k.\]
\end{rmk}


There is an alternate proof of the above descent result that we now highlight.

Garc\'{i}a L\'{o}pez and Sabbah \cite{HodgeLyubeznik} defined the \emph{Hodge-Lyubeznik numbers} $\lambda_{r,s}^{p,q}(\cO_{X,x})$ given $x\in X$, and consequently their ``intersection-variants'' ${\rm I}\lambda_{r}^{p,q}(\cO_{X,x})$ were considered in \cite{CDOIsolated}. It turns out that these numbers can detect these three invariants. We prove:

\begin{intro-corollary}\label{cor-HL} Let $Y$ be pure dimensional and let $f\colon Y \to X$ be a finite surjective morphism. For any $x\in X$, write $F_x =\{y_1,\dots, y_j\}$. Then
\begin{enumerate}
    \item $\lambda_{r,s}^{p,q}(\cO_{X,x}) \leq \sum_{i=1}^j \lambda_{r,s}^{p,q}(\cO_{Y,y_i})$,
    \item ${\rm I}\lambda_{r}^{p,q}(\cO_{X,x}) \leq \sum_{i=1}^j {\rm I}\lambda_{r}^{p,q}(\cO_{Y,y_i})$.
\end{enumerate}Moreover, we have \small
\[ \dim_{\C} {\rm Gr}^F_p {\rm Gr}^W_{-p-q} \cH^{-r} i_x^* W_{\dim X -1} \cH^0 \Q_X^H[\dim X] \leq \sum_{i=1}^j \dim_{\C} {\rm Gr}^F_p {\rm Gr}^W_{-p-q} \cH^{-r} i_{y_i}^* W_{\dim X -1} \cH^0 \Q_Y^H[\dim Y].\]
\end{intro-corollary}\normalsize

One can also obtain \corollaryref{cor-descent} using \corollaryref{cor-HL}.

Our final result is about the descent of (local analytic) $\Q$-factoriality defect. Given a normal projective variety $X$, one defines its {\it $\Q$-factoriality defect} $$\sigma(X):=\dim_{\Q}\frac{{\rm Div}_{\Q}(X)}{{\rm CDiv}_{\Q}(X)}$$
where ${\rm Div}_{\Q}(X):={\rm Div}(X)\otimes\Q$ and ${\rm CDiv}_{\Q}(X):={\rm CDiv}(X)\otimes\Q$
(here ${\rm Div}(X)$ is the free abelian group of Weil divisors on $X$ and ${\rm CDiv}(X)$ is the subgroup generated by the classes of Cartier divisors). One can also define their local analytic counterparts $\sigma^{\rm an}(X;x)$ given $x\in X$ (see \cites{Kaw, PPFactorial}). We prove the following

\begin{intro-corollary}\label{cor-qfact}
Let $f\colon Y \to X$ be a finite surjective morphism between normal projective varieties. Then 
\begin{enumerate}
    \item $\sigma(X)\leq\sigma(Y)$. 
    \item Let $x\in X$ and consider the analytic germ $(X,x)$. Also consider the analytic germ $(Y,F_x)$ where $F_x = \{y_1,\dots, y_j\}$ is the fiber over $x$, with induced map $f\colon (Y,F_x) \to (X,x)$. If $\sigma^{\rm an}(Y,y_i)$ is finite for all $1\leq i\leq j$, then $\sigma^{\rm an}(X,x)$ is finite, and we have
\[ \sigma^{\rm an}(X,x) \leq \sum_{i=1}^j \sigma^{\rm an}(Y,y_i).\]
\end{enumerate}
\end{intro-corollary}

\noindent{\bf Outline.} In Section \ref{sec-prelim}, we recall the necessary preliminaries. We prove \theoremref{thm-inj} and \corollaryref{cor-leftinv} in Section \ref{sec-inj}. The following Section \ref{sec-trace} outlines the construction of the trace morphism and proves \theoremref{thm-MHMTrace}. The descent results \corollaryref{cor-descent}, \corollaryref{cor-HL} and \corollaryref{cor-qfact} are proven in Section \ref{sec-descent}.

\medskip

\noindent{\bf Acknowledgments.} We thank Hyunsuk Kim, S\'andor Kov\'acs, Lauren\c{t}iu Maxim and Sung Gi Park for helpful conversations.

\section{Preliminaries}\label{sec-prelim} We begin with a quick review of the definition and properties for a theory of mixed sheaves (as defined by Saito \cite{SaitoMixedSheaves}). These theories are essentially an axiomatization of many of the desirable properties of Beilinson's conjectural theory of mixed motivic sheaves. 

Let $A\subseteq \R$ and $k\subseteq \C$ be subfields. A theory of mixed sheaves is the assignment of an $A$-linear abelian category $\cM(X)$ to any $k$-variety $X$ together with a ``forgetful functor'' ${\rm For} \colon \cM(X) \to {\rm Perv}(X^{\rm an}_{\C},A)$ with the following basic properties (and satisfying a partial collection of the six functor formalism, which we will not expand on here):
\begin{itemize} \item ${\rm For}$ is faithful and exact.

\item For any $M\in \cM(X)$, the perverse sheaf ${\rm For}(M)$ is $k$-constructible and quasi-unipotent.

\item Each $M \in \cM(X)$ has a finite increasing ``weight'' filtration $W_\bullet M$ (of subobjects in $\cM(X)$) so that every morphism in $\cM(X)$ is strict with respect to $W_\bullet$.

\item For any $w\in \Z$, the object ${\rm Gr}^W_w M$ is semi-simple.
\end{itemize}

We say an object $M$ is \emph{pure of weight $j$} if ${\rm Gr}^W_w M \neq 0$ implies $w=j$.

\begin{eg} For $k=\C$, one can take $\cM(-) = {\rm MHM}(-,A)$.
\end{eg}

We can consider the derived category $D^b(\cM(X))$ with its induced forgetful functor \[{\rm For} \colon D^b(\cM(X)) \to D^b_{\rm constr}(A_{X^{\rm an}_{\C}})\]
which is no longer faithful, but is \emph{conservative}, meaning an object vanishes if and only if its image under ${\rm For}$ vanishes.

The derived categories $D^b(\cM(X))$ satisfy a full six-functor formalism: we can take duals, tensor products, push-forwards and pull-backs along any morphism of $k$-varieties. Moreover, after analytification, these functors agree with the corresponding functors for perverse sheaves.

\begin{eg} \label{eg-constantObject} For $X = {\rm Spec}(k)$, we have the forgetful functor $$\cM(X) \to {\rm Perv}(X^{\rm an}_{\C}) = {\rm Vect}_{\rm f.d.}(A)$$ (analogous to mapping a mixed Hodge structure to its underlying vector space). The one-dimensional vector space has a canonical lift $A^\cM \in \cM({\rm Spec}(k))$ which is pure of weight $0$. 

Let $\kappa \colon X \to {\rm Spec}(k)$ be the constant map. Then $\kappa^* A^\cM = A^{\cM}_X$ is the ``constant object'' in $D^b(\cM(X))$. It does not lie in $\cM(X)$, and it is not necessarily pure.

By \cite{SaitoMixedSheaves}*{(3.6), Thm. 3.8} (see also \cite{SaitoMHM}*{(4.4.2)} if $\cM(-) = {\rm MHM}(-,A)$), the constant object is uniquely determined by the fact that ${\rm For}(A^\cM_X) = A_{X^{\rm an}_{\C}}$ and that there is a morphism $A^\cM \to \cH^0\kappa_* A^{\cM}_X$ lying over (via ${\rm For}$) the canonical morphism $A \to H^0(X^{\rm an}_{\C},A)$.
\end{eg}

By general properties of six functors in the mixed sheaf theory, we have
\[ A_X^\cM \in D^{\leq \dim X}(\cM(X)),\]
and moreover, for $X$ equidimensional we set
\[ {\rm Gr}^W_{\dim X} \cH^{\dim X} A^{\cM}_X = {\rm IC}_X^{\cM},\]
denoted in this way because it maps via ${\rm For}$ to the Goresky-MacPherson intersection complex ${\rm IC}_{X^{\rm an}_{\C}}$.

We now specialize to the case $k =\C, A = \Q$ and $\cM(-) = {\rm MHM}(-,\Q)$, and so in our notation above, we replace $\cM$ superscripts by $H$.

On a smooth variety $X$, the data of an object $M \in {\rm MHM}(X)$ consists of a tuple 
\[ ((\cM,F,W),(\cK,W),\alpha)\] where $(\cM,F,W)$ is a bi-filtered right $\cD_X$-module, $(\cK,W)$ is a finitely filtered $\Q$-perverse sheaf on $X^{\rm an}$, and $\alpha$ is a filtered isomorphism
\[ \alpha \colon {\rm DR}_X(\cM,W) \cong (\cK,W)\otimes_\Q \C,\]
however not every such tuple gives a mixed Hodge module. Using local embeddings of a singular variety into smooth varieties, it is possible to define the abelian cateogry ${\rm MHM}(X)$ when $X$ is singular.

Recall that the de Rham functor is defined by
\[ {\rm DR}(\cM)= \left[ \cM \otimes \bigwedge^{\dim X} \cT_X \xrightarrow[]{\nabla} \cM \otimes_{\cO_X} \bigwedge^{\dim X -1} \cT_X  \xrightarrow[]{\nabla} \dots \xrightarrow[]{\nabla} \cM \otimes_{\cO_X} \cT_X \xrightarrow[]{\nabla} \cM \right],\]
where the term on the right is placed in cohomological degree $0$ and where $\nabla$ is the connection given by the $\cD_X$-module structure.

If $(\cM,F)$ is a filtered $\cD_X$-module (meaning $F_k \cM$ is a coherent sub-$\cO_X$-module for all $k$ and $F_j \cM \cdot F_{\ell} \cD_X \subseteq F_{k+\ell}\cM$ for all $j,\ell \in \Z$), then we define a filtration on the de Rham complex by\small
\[ F_k {\rm DR}(\cM) =\] \[\left[F_{k-\dim X} \cM \otimes \bigwedge^{\dim X} \cT_X \xrightarrow[]{\nabla} F_{k-\dim X+1}\cM \otimes_{\cO_X} \bigwedge^{\dim X -1} \cT_X  \xrightarrow[]{\nabla} \dots \xrightarrow[]{\nabla} F_{k-1} \cM \otimes_{\cO_X} \cT_X \xrightarrow[]{\nabla} F_k \cM\right],\]\normalsize
where the differentials are not necessarily $\cO$-linear. However, if we consider ${\rm Gr}^F_k {\rm DR}_X(\cM)$, the morphisms become $\cO$-linear and so we have a bounded complex of coherent $\cO$-modules. This gives a functor ${\rm Gr}^F_k {\rm DR}_X \colon {\rm MHM}(X) \to D^b_{\rm coh}(\cO_X)$ which can then be extended to $D^b({\rm MHM}(X))$.

In fact, even if $X$ is singular, for any $p\in \Z$, there are functors \[{\rm Gr}^F_p {\rm DR}_X(-) \colon D^b({\rm MHM}(X)) \to D^b_{\rm coh}(\cO_X).\]

\begin{eg} \label{eg-DuBois} As in \exampleref{eg-constantObject}, there is a constant mixed Hodge module $\Q_X^H \in D^b({\rm MHM}(X))$ for any complex variety $X$. If $X$ is smooth, then $\Q_X^H[\dim X] \in {\rm MHM}(X)$ has underlying filtered $\cD$-module $(\omega_X,F_\bullet)$ with ${\rm Gr}^F_{-\dim X}(\omega_X) = \omega_X$. Almost from the definition, we have
\[ {\rm Gr}^F_{-p}{\rm DR}_X(\Q_X^H[\dim X]) \cong \Omega_X^p[\dim X -p].\]

If $X$ is singular, this equality no longer holds. However, by \cite{SaitoMHC}*{} we have an isomorphism
\[ {\rm Gr}^F_{-p} {\rm DR}(\Q_X^H[\dim X]) \cong \underline{\Omega}_X^p[\dim X-p]\]
for all $0 \leq p \leq \dim X$.
\end{eg}

Assume now $X$ is equidimensional. Consider the exact triangle
\[ \cK_X^{\bullet}\to \Q_X^{H}[\dim X] \to {\rm IC}_X^H \xrightarrow[]{+1}\]
and the short exact sequence
\[ 0 \to W_{\dim X-1} \cH^{\dim X} \Q_X^H \to \cH^{\dim X} \Q_X^H \to {\rm IC}_X^H \to 0.\]

Here $\cK_X^\bullet$ is the \emph{RHM-defect object} (carefully studied in \cite{PPLefschetz}), so-called because it is the obstruction to the variety $X$ being a rational homology manifold (RHM).

\begin{eg} In analogy with \exampleref{eg-DuBois}, we define the \emph{intersection Du Bois complexes}
\[ {\rm I}\underline{\Omega}_X^p = {\rm Gr}^F_{-p} {\rm DR}_X({\rm IC}_X^H)[p-\dim X].\]

By functoriality of ${\rm Gr}^F_{-p}{\rm DR}(-)$, we have a canonical morphism for all $0\leq p\leq \dim X$:
\begin{equation}\label{QtoIC} \underline{\Omega}_X^p \to {\rm I}\underline{\Omega}_X^p.\end{equation}
\end{eg}

Note that the map (\ref{QtoIC}) is induced by the map $\Q_X^H[\dim X] \to {\rm IC}_X^H$ which factors through $H^0\Q_X^H[\dim X]$. Again, by functoriality of ${\rm Gr}^F_{-p}{\rm DR}(-)$, the morphism (\ref{QtoIC}) factors through $\Gr^F_{-p}\DR$ of this object.

\begin{defi}
    Let $X$ be a complex variety. We define the \textit{perverse Du Bois complexes} 
    \[
    P\underline{\Omega}^p_X = \Gr^F_{-p}\DR(H^0(\Q_X^H[\dim X]))[p-\dim X].
    \]
\end{defi}

We recall the following recently defined invariants \cites{DOR,CDOIsolated, CDOR}:
\begin{defi}\label{def} For $X$ a equidimensional variety, define
\[ {\rm HRH}(X) = \sup \{ k \mid {\rm Gr}^F_{-p} {\rm DR}(\cK_X^\bullet) = 0 \text{ for all } p \leq k\},\]
\[ c(X) = \sup \{k \mid {\rm Gr}^F_{-p} {\rm DR}(\tau^{<0}\Q_X^H[\dim X]) = 0 \text{ for all } p \leq k\},\]
\[ w(X) = \sup \{k \mid {\rm Gr}^F_{-p} {\rm DR}(\cH^0 \cK_X^\bullet) =0 \text{ for all } p \leq k\},\]
so that (because $\tau^{<0} \Q_X^H[\dim X] = \tau^{<0} \cK_X^\bullet$) we have
\begin{equation} \label{eq-minHRH} {\rm HRH}(X) = \min\{c(X), w(x)\}.\end{equation}
\end{defi}

\begin{rmk} The variety $X$ is a $\Q$-homology manifold if and only if $\Q_X[\dim X] \to {\rm IC}_X$ is an isomorphism if and only if ${\rm HRH}(X) = +\infty$. The rational homology manifold condition is related to Poincar\'{e} duality, and the invariant ${\rm HRH}(X)$ is related to Poincar\'{e} duality holding up to a certain Hodge filtered piece \cites{PPLefschetz, DOR}.

The variety $X$ is a cohomologically complete intersection if and only if $\Q_X[\dim X]$ is a perverse sheaf if and only if $c(X) = +\infty$ if and only if ${\rm lcdef}(X) = 0$.

The condition $w(X) \geq 0$ is related to the property of weak rationality (see \eqref{itm-weaklyRational} below).
\end{rmk}

Recently, Garc\'{i}a L\'{o}pez and Sabbah \cite{HodgeLyubeznik} defined the \emph{Hodge-Lyubeznik numbers} as a refinement of the usual Lyubeznik numbers, taking into account mixed Hodge theory. We recall the definition here.

\begin{defi} Let $x\in X$. Define the Hodge-Lyubeznik numbers
\[
\lambda_{r,s}^{p,q}(\cO_{X,x}):=\dim_{\C}{\rm Gr}_{p}^F{\rm Gr}_{-p-q}^W\cH^{-r} i_x^*(\cH^{s-\dim X}\Q^H_X[\dim X])
\]
and the intersection Hodge-Lyubeznik numbers
\[
{\rm I}\lambda_{r}^{p,q}(\cO_{X,x}):=\dim_{\C}{\rm Gr}_{p}^F{\rm Gr}_{-p-q}^W\cH^{-r}i_x^*{\rm IC}_X^H.
\]
\end{defi}

The Hodge-Lyubeznik numbers refine the usual Lyubeznik numbers as follows:
\[ \lambda_{r,s}(\cO_{X,x}) = \sum_{p,q} \lambda_{r,s}^{p,q}(\cO_{X,x}).\]

We collect here re-interpretations of the singularity invariants just described.

\begin{thm}[\cite{CDOIsolated}*{Thm. A}, \cite{CDOR}*{Sect. 2.1}] \label{lem-otherDefs} The following statements hold:
\begin{enumerate}
\setlength\itemsep{1em}
    \item The invariant $c(X)$ has the following equivalent characterizations:
    \vspace{0.5em}
    \begin{itemize}
    \setlength\itemsep{0.5em}
         \item $\sup \{k \mid \lambda_{r,s}^{p,q}(\cO_{X,x}) = 0 \text{ for all } x\in X, s < \dim X, q\in \Z, p\geq -k\}$.
    \item $\sup\{k \mid {\rm depth}(\underline{\Omega}_X^p) \geq \dim X -p \text{ for all } p \leq k\}$.
    \item For any codimension $c$ embedding $X\subseteq Y$ into a smooth variety $Y$,
    \[ \sup\{k \mid F_k \cH^{j+c}_X(\cO_Y) = 0 \text{ for all } j>0\}.\]
    \item \label{itm-DuBoisType}
    $ \sup\left\{ k\, \middle|\,  \underline{\Omega}_X^p \to P\underline{\Omega}_X^p \text{ is an isomorphism for all } p\leq k\right\}$.
\end{itemize}
\item The invariant $w(X)$ has the following equivalent characterizations:
\vspace{0.5em}
\begin{itemize} 
\setlength\itemsep{0.5em}
\item  $\sup\left\{k\, \middle|\, \begin{aligned} & {\rm Gr}^F_{p} {\rm Gr}^W_{-p-q} \cH^{-r} i_x^* W_{\dim X -1} \cH^{\dim X} \Q_X = 0\\ & \text{ for all } x\in X, q,r\in \Z, p\geq -k\end{aligned}\right\}$.

\item \label{itm-weaklyRational} For any codimension $c$ embedding $X\subseteq Y$ into a smooth variety $Y$,
\[ \sup\{k \mid F_kW_{n+c} \cH^c_X(\cO_Y) = F_k \cH^c_X(\cO_Y)\}.\]

\item \label{itm-DuBoisTypew} $\sup\left\{ k\, \middle|\,  P\underline{\Omega}_X^p \to {\rm I}\underline{\Omega}_X^p  \text{ is an isomorphism for all } p\leq k\right\}$.
\end{itemize}  
\end{enumerate}
\end{thm}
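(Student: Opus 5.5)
The plan is to reduce every characterization to one basic principle about a single mixed Hodge module $M$ on $X$: \emph{${\rm Gr}^F_{-p}{\rm DR}(M) = 0$ for all $p \leq k$ if and only if the lowest Hodge pieces of $M$ vanish up to the corresponding index.} Concretely, after locally embedding $X \subseteq Y$ into a smooth $Y$ of dimension $d$, this means $F_{k-d}\cM = 0$ for the underlying right $\cD_Y$-module $(\cM,F)$. This follows by induction on $p$ from the shape of $F_\bullet {\rm DR}(\cM)$ recalled above. The rightmost term of ${\rm Gr}^F_{-p}{\rm DR}(\cM)$ is ${\rm Gr}^F_{-p}\cM$, and all other terms involve strictly lower $F$-pieces, so the lowest nonzero ${\rm Gr}^F$ of $\cM$ produces nonzero cohomology in degree $0$.

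Next, I would pass from complexes to cohomology objects. For $C = \tau^{<0}\Q_X^H[n]$ (with $n = \dim X$), the functors ${\rm Gr}^F_{-p}{\rm DR}$ are exact on ${\rm MHM}$ by strictness. The spectral sequence for $C$ therefore gives: ${\rm Gr}^F_{-p}{\rm DR}(C) = 0$ for all $p \leq k$ if and only if the same holds for each $\cH^{s-n}\Q_X^H[n]$ with $s<n$. Here I would use that the lowest $F$-pieces cannot cancel between different cohomological degrees, since each ${\rm Gr}^F_{-p}{\rm DR}(\cH^j)$ is concentrated in nonpositive degrees with top term as above. The same reduction applies to $\cH^0\cK_X^\bullet = W_{n-1}\cH^0\Q_X^H[n]$ for $w(X)$.

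With this in hand, the characterizations follow in turn.
\begin{enumerate}
\item \emph{Dévissage characterizations} (the last item for $c$ and for $w$). Apply ${\rm Gr}^F_{-p}{\rm DR}$ to the triangle $\tau^{<0}\Q_X^H[n] \to \Q_X^H[n] \to \cH^0\Q_X^H[n] \xrightarrow{+1}$ and to the short exact sequence $0 \to W_{n-1}\cH^0 \to \cH^0 \to {\rm IC}_X^H \to 0$. Vanishing of the third term for all $p \leq k$ is then equivalent to the maps $\underline{\Omega}_X^p \to P\underline{\Omega}_X^p$, respectively $P\underline{\Omega}_X^p \to {\rm I}\underline{\Omega}_X^p$, being isomorphisms for all $p\leq k$. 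This uses \exampleref{eg-DuBois} and the definitions.
\item \emph{Local cohomology characterizations.} Write $\Q_X^H[n]$ via $i^!\Q_Y^H[d]$ and duality, identifying $\cH^{j+c}_X(\cO_Y)$ with the $\cD_Y$-module underlying $\cH^{j}$ of a dual/twisted version of $\Q_X^H[n]$. The vanishing $F_k\cH^{j+c}_X(\cO_Y) = 0$ for $j>0$ is then exactly the lowest-piece condition from the basic principle. For $w$, the weight filtration $W_{n+c}$ on $\cH^c_X(\cO_Y)$ corresponds, after the Tate twist, to the piece $W_{n-1}\cH^0$ versus its ${\rm IC}$ quotient. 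Consequently, $F_kW_{n+c} = F_k$ is the statement that $F_k$ of the quotient $\cH^0\cK_X^\bullet$-part vanishes.
\item \emph{Depth characterization.} Use Grothendieck duality together with Saito's compatibility $\mathbf{R}\mathcal{H}om({\rm Gr}^F_{-p}{\rm DR}(M),\omega_X^\bullet) \cong {\rm Gr}^F_{p}{\rm DR}(\mathbf{D}M)$. The depth bound ${\rm depth}\,\underline{\Omega}_X^p \geq n-p$ amounts to the vanishing of $\cE xt^{i}(\underline{\Omega}_X^p,\omega_X^\bullet)$ for $i > p-n$. Dually, this becomes the vanishing of ${\rm Gr}^F DR$ of the cohomologies of $\mathbf{D}\Q_X^H[n]$ in the positive range. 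This range corresponds to $\tau^{<0}$ under duality, and I would conclude by induction on $p$ using the exact triangles for $p' < p$.
\item \emph{Stalk (Hodge--Lyubeznik) characterizations.} The lowest-piece condition on $M$ is equivalent to the vanishing of ${\rm Gr}^F_{p}$ for $p \geq -k$ on all $\cH^{-r}i_x^*M$. I would prove this by stratifying ${\rm Supp}\,M$ so that $M$ is a variation on each stratum, and inducting on strata via the triangles $j_!j^* \to {\rm id} \to i_*i^*$. The key inputs are that $i_x^*$ does not lower the minimal $F$-index beyond the expected shift, and that on the open stratum it is an equality.
\end{enumerate}

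I expect this last step, detecting the Hodge filtration bound pointwise via $i_x^*$, to be the main obstacle. Controlling how $F$ behaves under $i_x^*$ requires Saito's estimates for pullbacks (e.g.\ via $V$-filtrations for non-characteristic restriction). For that estimate I would first try reducing to a general hypersurface slice through $x$ and iterating.
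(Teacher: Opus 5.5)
There is a genuine gap. Your ``basic principle'' looks at the wrong end of the Hodge filtration, and it is false as stated.

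Your computation (the rightmost term of ${\rm Gr}^F_{-p}{\rm DR}(\cM)$ is ${\rm Gr}^F_{-p}\cM$, and the other terms have lower index) shows only this: the \emph{smallest} $m$ with ${\rm Gr}^F_m{\rm DR}(M)\neq 0$ equals the smallest $m$ with $F_m\cM\neq 0$. So the lowest Hodge piece of $M$ controls ${\rm Gr}^F_{-p}{\rm DR}(M)$ for $p$ \emph{large}, the $\underline{\Omega}^n$ end. But $c(X)$ and $w(X)$ concern vanishing for all $p\le k$, the $\underline{\Omega}^0$ end.

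A concrete counterexample: let $X\subset\C^4$ be the union of two planes meeting at the origin. Then $\tau^{<0}\Q_X^H[2]=\Q^H_{\{0\}}[1]$. Its right module $\C[\partial_1,\dots,\partial_4]$ has $F_{k-4}=0$ iff $k<4$, so your principle predicts $c(X)=3$. In fact ${\rm Gr}^F_0{\rm DR}(\Q^H_{\{0\}})=\C\neq 0$, so $c(X)=-1$. This is consistent with $\cH^3_X(\cO_{\C^4})\cong\cH^4_{\{0\}}(\cO_{\C^4})$ having $F_0\neq 0$. The index $k-d$ is the correct one for the right module underlying the twisted dual $\mathbf{D}(M)(-d)$, i.e.\ $F_k$ of a left local cohomology module. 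So you appear to have conflated $M$ with its dual.

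The other uses of the principle inherit the problem:
\begin{itemize}
\item Your ``no cancellation'' argument for passing to cohomology objects is run at the wrong end. At the first nonvanishing index for small $p$, ${\rm Gr}^F_{-p}{\rm DR}(\cH^j)\cong R\cH om(F_p\mathcal{N}_j,\omega_Y^\bullet)$, where $\mathcal{N}_j$ underlies $\mathbf{D}\cH^j$. This is typically spread over several degrees, not a single top term in degree $0$.
\item In step 2, your chain links the lowest pieces of local cohomology to ${\rm Gr}^F_{-p}{\rm DR}$ of the dual, not of $\tau^{<0}\Q_X^H[n]$.
\item In step 4, the stalk condition ${\rm Gr}^F_p\cH^{-r}i_x^*(-)=0$ for $p\ge -k$ concerns the top of $F_\bullet$. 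So the behaviour of the minimal $F$-index of $M$ under $i_x^*$ is not the relevant estimate.
\end{itemize}

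The missing idea is that duality must be the central step, not a side tool for the depth criterion. Your computation actually proves the correct principle: ${\rm Gr}^F_m{\rm DR}(N)=0$ for all $m\le k$ iff $F_k$ vanishes on every cohomology object of $N$. (The spectral sequence degenerates at the lowest index, where ${\rm Gr}^F_m{\rm DR}(\cH^b)=F_m\cH^b$ sits in degree $0$.) Combine this with ${\rm Gr}^F_{p}{\rm DR}(\mathbf{D}M)\cong R\cH om({\rm Gr}^F_{-p}{\rm DR}(M),\omega^\bullet)$, up to the usual shifts and twists. It then says that ${\rm Gr}^F_{-p}{\rm DR}(M)=0$ for all $p\le k$ iff $F_k$ vanishes on $\mathbf{D}M$.
\begin{itemize}
\item For $M=\tau^{<0}\Q_X^H[n]$, the dual is $\tau^{>0}\mathbf{D}(\Q_X^H[n])$, whose cohomologies are twists of $\cH^{j+c}_X(\cO_Y)$ for $j>0$. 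This gives at once the reduction to cohomology objects and the local cohomology criterion.
\item For $\cH^0\cK_X^\bullet=W_{n-1}\cH^0$, duality turns the subobject into the quotient $\cH^0\mathbf{D}/{\rm IC}$, which is the weight criterion for $w$.
\end{itemize}
This is how the paper itself reads these invariants in the proof of \theoremref{thm-inj}. There, $c(X)\ge k-1$ is read as ${\rm Gr}^F_p\cH^j\mathbf{D}(\Q_X^H[n])=0$ for $p<k$, $j>0$, and $w(X)\ge k-1$ as ${\rm IC}_X^H\subseteq\cH^0\mathbf{D}_X^H$ being an equality at the relevant $F$-level. The statement itself is only quoted from the cited works.

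Your d\'evissage step is fine as is. Your depth step goes through once its final link to $c(X)$ uses the corrected principle. For the stalks, what you need is that $i^*$ and $j_!$ preserve ``${\rm Gr}^F_{-p}{\rm DR}=0$ for $p\le k$''. Dually, this means $i^!$ and $j_*$ preserve ``$F_k=0$'', which follows from Saito's $V$-filtration description of $F_\bullet$ on localizations.
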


Similar characterizations of ${\rm HRH}(X)$ hold using the equality \eqref{eq-minHRH}.

The following alternative characterization of $w(X)$ will also prove useful.

\begin{rmk}[\cite{CDOR}] It is not hard to see that if $w(X) \geq k$, then we have equality
\[\lambda^{p,q}_{r,\dim X}(\cO_{X,x})={\rm I}\lambda^{p,q}_r(\cO_{X,x})\textrm{ for all }x\in X,\, q,r\in\Z,\,p\geq -k.\]
\end{rmk}

\section{Injectivity Theorems and Left-Inverse Characterizations}\label{sec-inj}
In this section, we work over $\C$.

Kov\'{a}cs-Schwede \cite{KS}*{Thm. 3.3} found the following useful injectivity result (with another proof \cite{MPLocCoh}*{Cor. B} using mixed Hodge module theory):
\begin{thm}[\cite{KS}*{Thm 3.3}] Consider the natural morphism $\cO_X \to \underline{\Omega}_X^0$. For all $j\in \Z$, the induced morphism on dual objects
\[ \cE xt^j_{\cO_X}(\underline{\Omega}_X^0,\omega_X^\bullet) \to \cH^j \omega_X^\bullet\]
is injective.
\end{thm}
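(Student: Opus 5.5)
This is the Kov\'acs--Schwede injectivity theorem, and I would prove it along the lines of \cite{MPLocCoh} using mixed Hodge modules. The claim is local, so I first fix a closed embedding $X \subseteq Y$ into a smooth variety $Y$ of dimension $N$, of codimension $c = N - \dim X$.

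The first step is to rewrite both sides via Grothendieck duality on $Y$. Since $\omega_X^\bullet = R\mathcal{H}om_{\cO_Y}(\cO_X,\omega_Y[N])$, for any $\mathcal{F}\in D^b_{\rm coh}(\cO_X)$ we have
\[ R\mathcal{H}om_{\cO_X}(\mathcal{F},\omega_X^\bullet) \cong R\mathcal{H}om_{\cO_Y}(\mathcal{F},\omega_Y[N]). \]
Applied to $\cO_X$ and to $\underline{\Omega}_X^0$, the map in question becomes the dual of $\cO_X \to \underline{\Omega}_X^0$ inside $D^b_{\rm coh}(\cO_Y)$.

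Next, I identify the dual of $\underline{\Omega}_X^0$ with a piece of the Hodge filtration on local cohomology. By \exampleref{eg-DuBois}, $\underline{\Omega}_X^0 \cong \Gr^F_0\DR(\Q_X^H[\dim X])[-\dim X]$. Saito's compatibility of $\Gr^F\DR$ with duality gives ${\bf D}(\Gr^F_p\DR(M))\cong \Gr^F_{-p}\DR({\bf D}M)$. Moreover, ${\bf D}(\Q_X^H[\dim X]) = \Q_X^H[\dim X]$ is given by $i^!\Q_Y^H[N]$ shifted and Tate-twisted, that is, by the complex $R\Gamma_X(\cO_Y)$ with its Hodge filtration. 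Combining these, and following the computation in \cite{MPLocCoh}, I expect the identification
\[ \cE xt^{j}(\underline{\Omega}_X^0,\omega_Y) \cong F_{0}\,\cH^{j}_X(\cO_Y)\otimes\omega_Y \]
up to an index shift by $c$, with the correct normalization of $F$. Here $F_0$ is the lowest possible Hodge piece, and it is contained in $\cH^j_X(\cO_Y)$.

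Then I match the map. Dually, $\cE xt^j(\cO_X,\omega_Y) \to \cH^j_X(\omega_Y)$ is the canonical map $\varinjlim_m\cE xt^j(\cO_Y/I^m,\omega_Y)\cong\cH^j_X(\omega_Y)$ restricted to the $m=1$ term. The morphism from the previous step should factor as
\[ F_0\cH^j_X(\omega_Y)\hookrightarrow \cE xt^j(\cO_X,\omega_Y) \to \cH^j_X(\omega_Y), \]
where the first arrow is the dual of our map and the composite is the natural inclusion. Once this factorization is established, injectivity follows immediately, because a map whose composite with another map is injective is itself injective.

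The main obstacle is verifying that the duality isomorphism is compatible with the natural maps. Concretely, I need that $F_0\cH^j_X(\cO_Y)$ is annihilated by the ideal $I$ of $X$, so that it lands in the $m=1$ term of the limit. I also need that the composite agrees with the natural inclusion $F_0 \subseteq \cH^j_X$. For the annihilation I would use the bound $F_0 \subseteq \{u : Iu=0\}$, which follows from the order of the $V$-filtration, or equivalently from $F_{\rm low}$ being a module over $\cO_Y/I$. For the compatibility I would trace the adjunction $\Q_Y^H\to i_*\Q_X^H$ through $\Gr^F_0\DR$.
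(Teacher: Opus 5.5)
The paper does not prove this statement. It quotes it from Kov\'acs--Schwede and points to \cite{MPLocCoh}*{Cor. B} for a Hodge-module proof, and your outline is that Musta\c{t}\u{a}--Popa route. The skeleton is right. One correction: your ``$\mathbf D(\Q_X^H[\dim X])=\Q_X^H[\dim X]$'' is false unless $X$ is a rational homology manifold; what you need is $i_*\mathbf D(\Q_X^H[\dim X])\cong i_*i^!\Q_Y^H[2N-\dim X](N)$. With that, Saito duality plus strictness gives $\cE xt^j(\underline{\Omega}_X^0,\omega_Y)\cong F_0\cH^j_X(\cO_Y)\otimes\omega_Y$, with no shift by $c$.

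However, the compatibility you flag is the entire content of the theorem, and your plan for it does not work as stated. Annihilation by $I$ does not put $F_0$ in the image of $\cE xt^j(\cO_X,\omega_Y)\to\cH^j_X(\omega_Y)$. That image lies in the $I$-torsion $\cH om(\cO_X,\cH^j_X(\omega_Y))$ but is in general far smaller. For example, take $X=\{x=y=0\}\cup\{z=w=0\}\subseteq\mathbb A^4$. Then $\cH^3_X(\omega_Y)\cong\cH^4_{\{0\}}(\omega_Y)$, whose $I$-torsion is not even coherent, while $\cE xt^3(\cO_X,\omega_Y)$ is a one-dimensional skyscraper. Annihilation is also unnecessary, since the arrow into $\cE xt^j(\cO_X,\omega_Y)$ already exists.

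Likewise, tracing $\Q_Y^H\to i_*\Q_X^H$ through $\Gr^F_0\DR$ only yields the dual of $\cO_Y\to\underline{\Omega}_X^0$. That is a map $\mathbb D(\underline{\Omega}^0_X)\to\omega_Y[N]$ which vanishes on every $\cE xt^j$ with $j\ne 0$, so on cohomology sheaves it says nothing about the map into $\cH^j_X(\omega_Y)$.

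What is missing is to compare the two maps in $D(\cO_Y)$ before taking cohomology. If $\mathcal F$ has cohomology supported on $X$, then composing with $R\Gamma_X\omega_Y\to\omega_Y$ is a bijection ${\rm Hom}(\mathcal F,R\Gamma_X\omega_Y)\to{\rm Hom}(\mathcal F,\omega_Y)$. To see this, apply ${\rm Hom}(\mathcal F,-)$ to the triangle $R\Gamma_X\omega_Y\to\omega_Y\to Rv_*v^*\omega_Y$, where $v\colon Y\setminus X\hookrightarrow Y$. Take $\mathcal F=R\cH om(\underline{\Omega}_X^0,\omega_Y)$ and consider two maps $\mathcal F\to R\Gamma_X\omega_Y$:
\begin{enumerate}
\item[(a)] the inclusion of the lowest Hodge piece of the strict filtered complex underlying $i_*i^!\Q_Y^H[N]$, transported by Saito's duality isomorphism;
\item[(b)] the dual of $\cO_X\to\underline{\Omega}_X^0$ followed by evaluation at $1$, $R\cH om(\cO_X,\omega_Y)=R\cH om(\cO_X,R\Gamma_X\omega_Y)\to R\Gamma_X\omega_Y$. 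Its $\cH^j$ is the map you want to prove injective, followed by the canonical map to $\cH^j_X(\omega_Y)$.
\end{enumerate}

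Now compose each with $R\Gamma_X\omega_Y\to\omega_Y$. For (b), the result is by construction the dual of $\cO_Y\to\cO_X\to\underline{\Omega}_X^0$. For (a), the result is the lowest Hodge piece of the dual adjunction $i_*i^!\Q^H_Y\to\Q_Y^H$ (suitably shifted and twisted). Since neither side has lower Hodge pieces, this equals $\Gr^F\DR$ of that morphism. By naturality of Saito's duality isomorphism, and functoriality of $\underline{\Omega}^0\cong\Gr^F_0\DR(\Q^H)$ under $X\hookrightarrow Y$, it is again the dual of $\cO_Y\to\underline{\Omega}_X^0$.

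Hence (a) $=$ (b). Taking $\cH^j$ and using strictness gives exactly your factorization through the inclusion $F_0\cH^j_X(\cO_Y)\otimes\omega_Y\subseteq\cH^j_X(\omega_Y)$, and injectivity follows. With this argument in place of the annihilation step, your proof is complete.
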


Injectivity theorems such as above gives rise to ``left-inverse characterization'' of singularity invariants. 
For example, as a corollary of the above, we obtain the (now standard) left-inverse characterization of Du Bois singularities (originally due to Kov\'{a}cs \cite{Kovacs}*{Thm. 2.3}):
\begin{cor}[\cite{Kovacs}*{Thm. 2.3})] \label{cor-DuBoisLeftInv} The variety $X$ has Du Bois singularities if and only if the natural morphism
\[ \cO_X \to \underline{\Omega}_X^0.\]
admits a left-inverse.
\end{cor}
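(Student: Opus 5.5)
The plan is to derive both directions from the Kov\'{a}cs--Schwede injectivity theorem (stated just above) by Grothendieck duality, recalling that $X$ is Du Bois by definition when $\cO_X \to \underline{\Omega}_X^0$ is a quasi-isomorphism.

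One direction is trivial: an isomorphism in $D^b_{\rm coh}(\cO_X)$ has a left inverse.

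For the converse, suppose $\phi\colon \cO_X \to \underline{\Omega}_X^0$ admits a left inverse $\psi$ with $\psi\circ\phi = {\rm id}_{\cO_X}$. Apply the contravariant functor $\mathbb{D}(-) = R\mathcal{H}om_{\cO_X}(-,\omega_X^\bullet)$. This gives
\[ \mathbb{D}(\phi)\colon \mathbb{D}(\underline{\Omega}_X^0) \to \omega_X^\bullet \quad\text{and}\quad \mathbb{D}(\psi)\colon \omega_X^\bullet \to \mathbb{D}(\underline{\Omega}_X^0),\]
with $\mathbb{D}(\phi)\circ\mathbb{D}(\psi) = {\rm id}$. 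On cohomology sheaves, $\cE xt^j(\phi)$ is therefore surjective for every $j$. By the Kov\'{a}cs--Schwede theorem it is also injective for every $j$. Hence $\mathbb{D}(\phi)$ is a quasi-isomorphism.

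Since $\underline{\Omega}_X^0$ lies in $D^b_{\rm coh}(\cO_X)$, as recalled via Saito's description ${\rm Gr}^F_0{\rm DR}(\Q_X^H[\dim X])$, Grothendieck duality gives $\mathbb{D}\circ\mathbb{D}\cong {\rm id}$ on $D^b_{\rm coh}(\cO_X)$. Applying $\mathbb{D}$ once more and using naturality of the biduality isomorphism, $\phi \cong \mathbb{D}\mathbb{D}(\phi)$ is a quasi-isomorphism, so $X$ is Du Bois.

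There is no real obstacle here. The only points to check are that the natural map on $\cE xt$ sheaves in the Kov\'{a}cs--Schwede theorem is exactly $\cH^j\mathbb{D}(\phi)$, and that the coherence needed for biduality holds. Both are standard.

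Two remarks are worth making. First, the statement is local on $X$ in the sense that both conditions can be checked on an open cover: the dualizing complex $\omega_X^\bullet$ exists on any (separated, finite type) complex variety, so no embedding is needed. Second, it is enough that a left inverse exists; it need not be canonical.
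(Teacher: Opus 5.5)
Your proposal is correct and follows the paper's proof essentially verbatim. Both dualize the left inverse to get a right inverse of $\mathbb{D}(\underline{\Omega}_X^0)\to\omega_X^\bullet$, combine the resulting surjectivity on $\cE xt^j$ with the Kov\'acs--Schwede injectivity, and conclude by biduality on $D^b_{\rm coh}(\cO_X)$. Your checks on coherence and on identifying the map as $\cH^j\mathbb{D}(\phi)$ simply spell out what the paper leaves implicit.
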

\begin{proof} The forward implication is clear. For the converse, assume a left-inverse exists. Thus, a right-inverse to the dual morphism $\mathbb D(\underline{\Omega}_X^0) \to \omega_X^\bullet$ exists. In particular, for all $j\in \Z$, the morphism (induced by applying $\cH^j(-)$)
\[ \cE xt^{j}(\underline{\Omega}_X^0,\omega_X^\bullet) \to \cH^j \omega_X^\bullet\]
is surjective. By the previous theorem, it is injective, too, hence an isomorphism. Thus the induced map on dual objects is an isomorphism, hence so is the original morphism.
\end{proof}

We aim to prove such characterizations for the invariants in \definitionref{def}. To this end, recall the following injectivity result concerning ${\rm HRH}(X)$ (\cite{PSV}*{Thm. 10.5}, due to Sung Gi Park):
\begin{thm}[Sung Gi Park] \label{lem-injLem1} Assume $X$ is equidimensional with ${\rm HRH}(X) \geq k-1$. Then the natural morphism
\[ \cE xt^{i}_{\cO_X}({\rm I}\underline{\Omega}_X^k,\omega_X^\bullet) \to \cE xt^{i}_{\cO_X}(\underline{\Omega}_X^k,\omega_X^\bullet)\]
is an isomorphism for $i < k-n$, injective for $i=k-n$ and the domain vanishes for $i > k-n$.
\end{thm}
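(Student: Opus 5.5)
The plan is to work entirely on the Hodge-module side. I take the triangle $\cK_X^\bullet\to\Q_X^H[n]\to{\rm IC}_X^H\xrightarrow{+1}$, apply ${\rm Gr}^F_{-k}{\rm DR}(-)$ and shift by $[k-n]$. This gives an exact triangle
\[ {\rm Gr}^F_{-k}{\rm DR}(\cK_X^\bullet)[k-n]\to\underline{\Omega}_X^k\to{\rm I}\underline{\Omega}_X^k\xrightarrow{+1}. \]
Applying $R\mathcal{H}om(-,\omega_X^\bullet)$ produces a long exact sequence relating $\cE xt^i({\rm I}\underline{\Omega}_X^k,\omega_X^\bullet)$, $\cE xt^i(\underline{\Omega}_X^k,\omega_X^\bullet)$ and $\cE xt^{i}({\rm Gr}^F_{-k}{\rm DR}(\cK_X^\bullet)[k-n],\omega_X^\bullet)$. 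Both the isomorphism for $i<k-n$ and the injectivity at $i=k-n$ then reduce to a single vanishing statement: $\cE xt^{i}({\rm Gr}^F_{-k}{\rm DR}(\cK_X^\bullet)[k-n],\omega_X^\bullet)=0$ for $i\le k-n$.

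The vanishing of the domain for $i>k-n$ I would get from Saito's duality ${\rm \mathbf R}\mathcal{H}om({\rm Gr}^F_{-k}{\rm DR}(M),\omega_X^\bullet)\cong{\rm Gr}^F_{k}{\rm DR}(\mathbb D M)$. Combined with $\mathbb D\,{\rm IC}_X^H\cong{\rm IC}_X^H(n)$, this identifies $\cE xt^i({\rm I}\underline{\Omega}_X^k,\omega_X^\bullet)$ with $\cH^{i+n-k}{\rm Gr}^F_{k-n}{\rm DR}({\rm IC}_X^H)$. Since ${\rm Gr}^F{\rm DR}$ of a single mixed Hodge module is concentrated in non-positive degrees, this vanishes for $i>k-n$.

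For the key vanishing I use the hypothesis ${\rm HRH}(X)\ge k-1$, which says ${\rm Gr}^F_{-p}{\rm DR}(\cK_X^\bullet)=0$ for $p\le k-1$. I would first pass to the cohomology objects $\cH^j\cK_X^\bullet$ (with $j\le 0$) via the spectral sequence / truncation triangles; if needed, I would argue by dévissage along the weight filtration so as to reduce to pure Hodge modules with strict support $Z$ of dimension $d\le n-1$. Indeed $\cK_X^\bullet$ has weights $<n$ in degree $0$ and is supported on the non-RHM locus. For such a module $M$, vanishing of all ${\rm Gr}^F_{-p}{\rm DR}$ with $p<k$ forces ${\rm Gr}^F_{-k}{\rm DR}(M)$ to reduce to its last term, namely the lowest Hodge piece of $M$ placed in degree $0$. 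By Saito's description, this lowest piece is of the form $\pi_*(\omega_{\tilde Z}\otimes\mathcal V)$ for a resolution $\pi\colon\tilde Z\to Z$. By Grauert--Riemenschneider vanishing, its Grothendieck dual is concentrated in the single degree $-d$, that is, the sheaf is Cohen--Macaulay of dimension $d$. Feeding this into the shifts $[k-n]$ and $[-j]$ should give the required range of vanishing, using $d\le n-1$ and $j\le 0$.

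The main obstacle is this last step: the precise degree bookkeeping. A naive estimate using only $\dim Z\le n-1$ is off by roughly $n$. So the argument must genuinely use that each $\cH^j\cK_X^\bullet$ contributes only through a Cohen--Macaulay lowest Hodge piece, and it must carefully track the normalization of $F$ under duality and Tate twist. The delicate points are (i) checking that the reduction to a single coherent sheaf survives in the mixed (non-pure) case, via strictness of the weight filtration, and (ii) the case $j=0$, where $\cH^0\cK_X^\bullet=W_{n-1}\cH^n\Q_X^H$ and one must use that its support and weights are strictly smaller than those of ${\rm IC}_X^H$. If the direct approach fails, I would first try comparing with ${\rm Gr}^F_{k}{\rm DR}(\mathbb D\cK_X^\bullet)$ and using the weight bound $\ge n+1$ on $\mathbb D\cK_X^\bullet$ together with Hodge symmetry on graded pieces.
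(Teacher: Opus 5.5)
Your triangle and long-exact-sequence reduction, and your proof that the domain vanishes for $i>k-n$, are correct. The gap is the central step, the vanishing of $\cE xt^i(\Gr^F_{-k}\DR(\cK_X^\bullet)[k-n],\omega_X^\bullet)$. It is not proved, and the route you sketch fails for three concrete reasons.

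(a) The range is off by one. The long exact sequence needs vanishing only for $i\le k-n-1$, and at $i=k-n$ the vanishing is false in general. Take $k=0$ and $X$ the cone over a smooth plane cubic. At $i=-2$ the map is $\pi_*\omega_{\widetilde X}\hookrightarrow\omega_X$, and its nonzero cokernel injects into the group you want to kill.

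(b) The hypothesis $\Gr^F_{-p}\DR(M)=0$ for $p<k$ does \emph{not} make $\Gr^F_{-k}\DR(M)$ the lowest Hodge piece of $M$ in degree $0$. For $M=\Q_Z^H[d](-k)$ with $Z$ smooth of dimension $d$, the hypothesis holds but $\Gr^F_{-k}\DR(M)=\cO_Z[d]$. The collapse to a lowest Hodge piece happens on the dual side, for $\Gr^F_k\DR(\mathbb D M)$.

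(c) The lowest piece $\pi_*(\omega_{\widetilde Z}\otimes\mathcal V)$ is not Cohen--Macaulay in general. For instance, the dual of $\pi_*\omega_{\widetilde Z}$ is $R\pi_*\cO_{\widetilde Z}[d]$, which has higher cohomology whenever $Z$ is normal but not rational.

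The bookkeeping you describe as ``off by roughly $n$'' comes from (b) and (c), not from a subtle normalization.

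The missing idea is to dualize first. By duality,
$\cE xt^i(\Gr^F_{-k}\DR(\cK_X^\bullet)[k-n],\omega_X^\bullet)\cong\cH^{i+n-k}\Gr^F_k\DR(\mathbb D\cK_X^\bullet)$, and ${\rm HRH}(X)\ge k-1$ says $\Gr^F_p\DR(\mathbb D\cK_X^\bullet)=0$ for $p\le k-1$.

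This forces $F_{k-1}\cH^j(\mathbb D\cK_X^\bullet)=0$ for every $j$. Indeed, at the smallest $p$ with some $\Gr^F_p\cH^j(\mathbb D\cK_X^\bullet)\neq 0$, each $\Gr^F_p\DR(\cH^j\mathbb D\cK_X^\bullet)$ is that sheaf placed in degree $0$. The spectral sequence then degenerates, giving $\Gr^F_p\DR(\mathbb D\cK_X^\bullet)\neq 0$, a contradiction.

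Consequently $\Gr^F_k\DR(\cH^j\mathbb D\cK_X^\bullet)$ is the single sheaf $\Gr^F_k\cH^j\mathbb D\cK_X^\bullet$ in degree $0$. Since $\cK_X^\bullet\in D^{\le 0}$, we have $\mathbb D\cK_X^\bullet\in D^{\ge 0}$. The same spectral sequence then places $\Gr^F_k\DR(\mathbb D\cK_X^\bullet)$ in degrees $\ge 0$, which is exactly the vanishing for $i<k-n$. No weights, strict supports, resolutions or depth enter.

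This is the spectral-sequence argument the paper uses for part (1) of Theorem~\ref{thm-inj}, where it is applied only to $\tau^{<0}\Q_X^H[n]$. The paper treats $\cH^0\cK_X^\bullet=W_{n-1}\cH^n\Q_X^H$ separately in part (2), by comparing $\Gr^F_{k-n}\DR$ of ${\rm IC}_X^H\subseteq\cH^0\mathbf D_X^H$ term by term. It then obtains Park's theorem by composing through $P\underline{\Omega}_X^k$. Once repaired, your single triangle handles both pieces in one step.
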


\theoremref{thm-inj}\eqref{lem-injLem2} provides an analogue of the above where $P\underline{\Omega}_X^k$ takes the role of $\underline{\Omega}_X^k$, whose proof mirrors that of \lemmaref{lem-injLem1}:

\begin{proof}[Proof of \theoremref{thm-inj}\eqref{lem-injLem2}] As the condition is local, we can assume $X$ is embedded into some smooth variety $Y$. 

The condition $w(Z) \geq k-1$ is equivalent to requiring
\[ {\rm IC}_X^H \subseteq \cH^0 \mathbf D_X^H\]
to be an isomorphism at $F_{k-1-\dim X}$, where $\mathbf D_X^H = \mathbf D(\Q_X^H[\dim X])(-\dim X)$ is (essentially) the dual of $\Q_X^H[\dim X]$.

If we apply ${\rm Gr}^F_{k-\dim X}{\rm DR}(-)$ to the inclusion ${\rm IC}_X^H \subseteq \cH^0 \mathbf D_X^H$, then using the self-duality of ${\rm IC}_X^H$ and the commutativity of ${\rm Gr}^F_{k-\dim X}{\rm DR}(-)$ with the dual functor, we see that we get the morphism
\[ \mathbb D( {\rm I}\underline{\Omega}_X^k) \to \mathbb D({\rm Gr}^F_{-k} {\rm DR}(\cH^{\dim X}\Q_X^H)[k-\dim X]).\]

So we only need to understand this morphism. By definition of the ${\rm Gr}_{k-\dim X}{\rm DR}(-)$ functor, it corresponds to
\small
\[ \begin{tikzcd} \dots \ar[r] & {\rm Gr}^F_{k-2-\dim X} {\rm IC}_X^H \otimes \wedge^2 \cT_Y \ar[d,"\cong"] \ar[r] & {\rm Gr}^F_{k-1-\dim X} {\rm IC}_X^H\otimes \cT_Y \ar[d,"\cong"] \ar[r] & {\rm Gr}^F_{k-1-\dim X} {\rm IC}_X^H\ar[d,hook] \\ \dots \ar[r] & {\rm Gr}^F_{k-2-\dim X} \cH^0\mathbf D_X^H \otimes \wedge^2 \cT_Y\ar[r] & {\rm Gr}^F_{k-1-\dim X} \cH^0 \mathbf D_X^H\otimes\cT_Y \ar[r] & {\rm Gr}^F_{k-1-\dim X} \cH^0 \mathbf D_X^H \end{tikzcd}\]\normalsize
and so the claims are obvious.
\end{proof}

We proceed to prove \theoremref{thm-inj}\eqref{lem-injLem3}. The morphism described in the statement is obtained by dualizing the natural morphism
\[ \Q_X^H[\dim X] \to \cH^0(\Q_X^H[\dim X])\]
to get
\[ \mathbf D(\cH^0(\Q_X^H[\dim X])) \to \mathbf D(\Q_X^H[\dim X]),\]
and then applying ${\rm Gr}^F_{p}{\rm DR}(-)$ and commuting $\mathbf D$ across this functor, we have a natural morphism
\[ R\cH om_{\cO_X}({\rm Gr}^F_{-p}{\rm DR}(\cH^0(\Q_X^H[\dim X]),\omega_X^\bullet) \to R\cH om_{\cO_X}(\underline{\Omega}_X^p[\dim X-p],\omega_X^\bullet).\]

\begin{proof}[Proof of \theoremref{thm-inj}\eqref{lem-injLem3}] The question is local, so we can embed $X$ into a smooth variety $Y$. Assume $\mathbf D(\Q_X^H[\dim X])$ is represented by a bounded complex $M^\bullet$ of mixed Hodge modules on $Y$. By construction of the morphism in the lemma statement, it suffices to prove that the natural morphism
\[ {\rm Gr}^F_{k} {\rm DR}(\cH^0 M^\bullet) \to {\rm Gr}^F_k {\rm DR}(M^\bullet)\]
is injective on cohomology.

The assumption $c(X) \geq k-1$ is equivalent to requiring the natural morphism $F_{k-1} \cH^0(M^\bullet) \to F_{k-1} M^\bullet$ to be a quasi-isomorphism (recall that $F_{k-1}(-)$ is an exact functor). In other words, for all $p < k$ and $j>0$, we have ${\rm Gr}^F_p \cH^j M^\bullet = 0$. Thus, we see that for all $j > 0$, the complex
\[ {\rm Gr}^F_k {\rm DR}(\cH^j M^\bullet) \text{ is equal to } {\rm Gr}^F_k \cH^j M^\bullet \text{ placed in degree 0}.\]

We have the spectral sequence
\[ E_{2}^{i,j} = \cH^i {\rm Gr}^F_k {\rm DR}(\cH^j M^\bullet) \implies \cH^{i+j} {\rm Gr}^F_k {\rm DR}(M^\bullet),\]
and so we see that $E_2^{i,j} \neq 0$ implies either $j > 0$ and $i=0$ or $j = 0$ and $i \leq 0$. In particular, for any $r\geq 2$, if we look at
\[ E_r^{i-r,j+(r-1)} \xrightarrow[]{d_r} E_r^{i,j} \xrightarrow[]{d_r} E_r^{i+r,j-(r-1)}\]
for $j=0$ and $i\leq 0$, the left and right objects both vanish.

So we conclude
\[ E_2^{i,0} = E_\infty^{i,0} \hookrightarrow \cH^{i} {\rm Gr}^F_{k} {\rm DR}(M^\bullet),\]
proving the desired injectivity.
\end{proof}

\begin{rmk} The injectivity results in \theoremref{thm-inj} combine to give another proof of Park's \theoremref{lem-injLem1}. 
\end{rmk}

We can now prove the characterizations of $c(-)$, $w(-)$ and ${\rm HRH}(-)$ in terms of left inverses:

\begin{proof}[Proof of \corollaryref{cor-leftinv}]
    The proofs are the same as \corollaryref{cor-DuBoisLeftInv} above (though using induction on $k$). We only provide the proof sketch for (1).

    The proof is by induction on $k$. For the base case $k=0$, note that the injectivity of \theoremref{thm-inj}\eqref{lem-injLem3} holds, and so the claim is clear.

For $k>0$, we know by induction that $c(X) \geq k-1$. So we can apply the result of \theoremref{thm-inj}\eqref{lem-injLem3} to conclude.
\end{proof}

\section{Trace Morphism for Mixed Sheaves}\label{sec-trace}
Let $\cM(-)$ be an arbitrary theory of $A$-mixed sheaves on $k$-varieties, where $k\subseteq \C$ and $A\subseteq \R$ are subfields.

We aim to prove \theoremref{thm-MHMTrace} from the introduction. The most important case is that of a finite group quotient, so we begin with that.

We review the construction of group invariants of mixed Hodge modules (though we repeat the arguments in the mixed sheaf setting), following \cite{SymmPowerHodge} which concerns the case of $\mathfrak S_n$ acting on $M^{\boxtimes n}$ (for $M \in D^b({\rm MHM}(X))$).

In our setting, let $Y$ be a variety and let $G$ be a finite group acting on $Y$ with quotient $f\colon Y \to X$. For any $g\in G$, we have the action map $g\colon Y \to Y$ satisfying $f\circ g = f$. We consider the object $f_* A_Y^\cM \in D^b(\cM(X))$.

\begin{lem} We have a natural ring homomorphism
\[ \gamma \colon \Q[G] \to {\rm End}_{D^b(\cM(X))}(f_* A_Y^{\cM}).\]
\end{lem}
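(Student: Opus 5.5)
For each $g\in G$ I would define an endomorphism $\gamma(g)$ of $f_*A_Y^\cM$ and then check that $g\mapsto\gamma(g)$ is multiplicative and unital. Linearity then gives the extension to $\Q[G]$, which makes sense because ${\rm End}$ is $A$-linear and $\Q\subseteq A$.

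\emph{Construction.} Since $g\colon Y\to Y$ is an automorphism, there is a canonical isomorphism $\phi_g\colon g^*A_Y^\cM\cong A_Y^\cM$. It comes from $g^*\kappa_Y^*\cong(\kappa_Y\circ g)^*=\kappa_Y^*$, using the pseudo-functoriality of pull-back in the six-functor formalism. Let $\eta_g\colon {\rm id}\to g_*g^*$ be the adjunction unit; it is an isomorphism because $g$ is an isomorphism. I would set
\[ \gamma(g)\colon f_*A_Y^\cM \xrightarrow{f_*\eta_g} f_*g_*g^*A_Y^\cM \xrightarrow{f_*g_*\phi_g} f_*g_*A_Y^\cM \cong (f\circ g)_*A_Y^\cM = f_*A_Y^\cM. \]

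\emph{Compatibility.} The main issue is to show $\gamma(gh)=\gamma(g)\gamma(h)$, or $\gamma(h)\gamma(g)$ depending on convention; if it comes out as an anti-homomorphism, I would replace $g$ by $g^{-1}$. Equality of two morphisms in $D^b(\cM(X))$ cannot be checked after applying ${\rm For}$, since ${\rm For}$ is not faithful on the derived category. So the argument has to go through coherence of the formalism. The point is that the composition isomorphisms $(gh)^*\cong h^*g^*$ and $(gh)_*\cong g_*h_*$ satisfy the usual cocycle conditions and are compatible with the adjunction units. The isomorphisms $\phi_g$ are all induced from the single identity $\kappa_Y\circ g=\kappa_Y$, so they satisfy the cocycle relation $\phi_{gh}=\phi_h\circ h^*\phi_g$, modulo the composition isomorphisms. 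Combining these gives multiplicativity, and $\gamma(e)={\rm id}$ is clear.

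\emph{Fallback.} If tracking coherence proves too delicate, I would argue by rigidity instead. Using adjunction,
\[ {\rm End}(f_*A_Y^\cM)={\rm Hom}(f^*f_*A_Y^\cM,A_Y^\cM). \]
Moreover, morphisms out of constant-type objects are rigidified by the uniqueness statement for the constant object in \exampleref{eg-constantObject}: it is determined by ${\rm For}$ together with the morphism $A^\cM\to\cH^0\kappa_*A^\cM_X$. With this I would check that both $\gamma(gh)$ and $\gamma(g)\gamma(h)$ are the unique lift of the same morphism, namely pull-back by $(gh)^{-1}$, on the underlying constructible complex $f_*A_Y$. For finite $f$ this complex is a sheaf in degree $0$, and the action there is visibly a group action.

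The step I expect to be the main obstacle is this coherence/multiplicativity check in $D^b(\cM(X))$, because the forgetful functor on the derived category is only conservative, not faithful.
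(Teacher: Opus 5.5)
Your construction is the paper's: $\gamma(g)$ is $f_*$ applied to the unit $A_Y^\cM\to g_*g^*A_Y^\cM\cong g_*A_Y^\cM$, together with $f\circ g=f$. The paper stops at the construction, so your coherence argument for multiplicativity fills in a check it leaves implicit; on underlying sheaves $\gamma(g)$ is $s\mapsto s\circ g$, so one does get an anti-homomorphism, which $g\mapsto g^{-1}$ fixes and which is harmless for the inversion-invariant $\sigma_G$. Your fallback is unnecessary and misstated, since $f^*f_*A_Y^\cM$ is not a constant object; the correct version uses $f_*=f_!$ to write ${\rm End}(f_*A_Y^\cM)\cong{\rm Hom}(A^\cM,\kappa_{Y*}f^!f_*A_Y^\cM)$, and because $\kappa_{Y*}f^!f_*A_Y$ is concentrated in degrees $\geq 0$ ($f^!$ is left exact for finite $f$), this ${\rm Hom}$ injects under ${\rm For}$ into ${\rm End}(f_*A_Y)$.
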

\begin{proof} As $f \circ g= f$ for all $g\in G$, the action comes from the adjunction for $g \colon Y \to Y$: we have a natural morphism
\[ A_Y^\cM \to g_* g^* A_Y^\cM,\]
and so by applying $f_*$ we get a natural endomorphism
\[ \gamma(g) \colon f_* A_Y^\cM \to (f\circ g)_* A_Y^\cM = f_* A_Y^\cM.\]
\end{proof}

Inside $\Q[G]$ we have the \emph{invariant projector} $\sigma_G = \frac{1}{|G|} \sum_{g\in G} g$. This is an idempotent element (as is easily verified). Hence, we get an idempotent operator
\[ \gamma(\sigma_G) \in {\rm End}_{D^b(\cM(X))}(f_* A_Y^{\cM}).\]

The category $D^b(\cM(X))$ is \emph{idempotent complete} (or \emph{Karoubian}), meaning every idempotent splits (uniquely up to unique isomorphism). This is formal from the existence of the bounded $t$-structure: see \cites{BS,LC}.

Recall that an idempotent endomorphism $e \colon M \to M$ splits if there exists some $N$ and morphisms
\[ M \xrightarrow[]{p} N \xrightarrow[]{i} M\]
such that $i\circ p = e$ and $p\circ i = {\rm id}_N$. The uniqueness up to unique isomorphism statement is regarding the object $N$ and the factorization of $e$.

\begin{defi} We define $(f_* A_Y^\cM)^G$, the $G$-invariant part, to be the object which splits the idempotent $\gamma(\sigma_G)$. This is well-defined up to isomorphism in $D^b(\cM(X))$ and we have morphisms
\[ f_* A_Y^{\cM} \to (f_* A_Y^{\cM})^G \to f_* A_Y^{\cM}.\]
\end{defi}

In fact, this splitting gives us the trace morphism in this setting:
\begin{prop} We have a canonical isomorphism
\[ (f_* A_Y^{\cM})^G \cong A_X^{\cM}\]
and moreover we can choose a projection $f_* A_Y^\cM \to (f_* A_Y^\cM)^G = A_X^{\cM}$ splitting the natural morphism $A_X^\cM \to f_* A_Y^\cM$, which gives ${\rm Tr}_f$ in this setting.
\end{prop}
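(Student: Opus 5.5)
We first construct a morphism $A_X^\cM \to (f_* A_Y^\cM)^G$ and then show it is an isomorphism. Let $\eta\colon A_X^\cM \to f_* A_Y^\cM$ be the adjunction unit $A_X^\cM \to f_* f^* A_X^\cM = f_* A_Y^\cM$. Since $f\circ g = f$, the composite $\gamma(g)\circ \eta$ is the unit for $f\circ g=f$, so $\gamma(g)\circ\eta = \eta$ for every $g\in G$. Hence $\gamma(\sigma_G)\circ \eta = \eta$. With the splitting $f_* A_Y^\cM \xrightarrow{p} (f_*A_Y^\cM)^G \xrightarrow{i} f_*A_Y^\cM$, we set $\alpha = p\circ\eta \colon A_X^\cM \to (f_*A_Y^\cM)^G$. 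Then $i\circ\alpha = \gamma(\sigma_G)\circ\eta = \eta$.

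Next we show $\alpha$ is an isomorphism. Since ${\rm For}$ is conservative on $D^b(\cM(X))$, it suffices to check that ${\rm For}(\alpha)$ is an isomorphism. The functor ${\rm For}$ is additive and commutes with $f_*$, $f^*$ and adjunction units, so it takes the splitting of $\gamma(\sigma_G)$ to the splitting of the corresponding idempotent on $f_*A_{Y^{\rm an}}$. That splitting is computed in $D^b_{\rm constr}(A_{X^{\rm an}})$, which is also Karoubian, and the splitting is unique up to unique isomorphism. Because $f$ is finite, $f_*A_{Y^{\rm an}}$ is a sheaf in degree $0$, and the action of $g$ is the natural one by pullback of locally constant functions. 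The image of the idempotent is therefore the subsheaf of $G$-invariants $(f_*A_{Y^{\rm an}})^G$.

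The classical topological fact is that, for a finite quotient $X^{\rm an} = Y^{\rm an}/G$ and a field $A$ of characteristic $0$, the unit $A_{X^{\rm an}} \to (f_*A_{Y^{\rm an}})^G$ is an isomorphism. We check this on stalks at $x$. The stalk of $f_*A_{Y^{\rm an}}$ is $A^{F_x}$, since $f$ is finite with fiber $F_x$. The group $G$ acts transitively on $F_x$, so the invariants are exactly the constant functions $A$, which is the image of the unit. Therefore ${\rm For}(\alpha)$ is an isomorphism, and so $\alpha$ is an isomorphism. This gives the canonical identification, since $\alpha$ is built only from adjunction and the canonical splitting.

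Finally, take ${\rm Tr}_f = \alpha^{-1}\circ p$. Then ${\rm Tr}_f\circ\eta = \alpha^{-1}\circ p\circ\eta = \alpha^{-1}\circ\alpha = {\rm id}$, so ${\rm Tr}_f$ splits $\eta$. The main obstacle is the compatibility of ${\rm For}$ with the idempotent splitting and with $\gamma(g)$. The point to verify is that ${\rm For}(\gamma(g))$ really is the topological action. This follows because $\gamma(g)$ is built from adjunction morphisms, and ${\rm For}$ is compatible with the six-functor adjunctions.
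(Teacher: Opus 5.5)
Your proof is correct, but it takes a different route from the paper's.

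\textbf{The paper's route.} The paper gets the isomorphism $(f_*A_Y^\cM)^G\cong A_X^\cM$ abstractly, from Saito's rigidity of the constant object. An object whose underlying complex is $A_X$ and which satisfies the $\cH^0\kappa_*$ condition is canonically $A_X^\cM$. For the splitting, it uses a Hom computation: by adjunction and $t$-structure considerations, ${\rm Hom}(A_X^\cM, f_*A_Y^\cM)$ injects into ${\rm Hom}_A(A,H^0(Y,A))=A$. So the inclusion $i$ of the invariant part agrees with the unit $\eta$ up to a nonzero scalar, and the paper rescales the projection accordingly.

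\textbf{Your route.} You build the comparison map $\alpha=p\circ\eta$ explicitly. Using that $\eta$ is $G$-invariant ($\gamma(g)\circ\eta=\eta$), you get $i\circ\alpha=\eta$ exactly. You then show $\alpha$ is an isomorphism via conservativity of ${\rm For}$ and the stalk computation $(A^{F_x})^G=A$, which uses that $G$ acts transitively on the fibres.

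\textbf{What your route buys.}
\begin{itemize}
\item It never invokes the rigidity theorem or the one-dimensionality of the Hom space.
\item It needs no rescaling, since ${\rm Tr}_f=\alpha^{-1}\circ p$ satisfies ${\rm Tr}_f\circ\eta={\rm id}$ on the nose.
\item It is local on $X$. The paper's step $H^0(Y,A)=A$ tacitly assumes $Y$ is connected, and ``unique up to a constant'' needs adjusting if $Y$ or $X$ is disconnected. Your argument is insensitive to this.
\item It spells out the identification ${\rm For}((f_*A_Y^\cM)^G)\cong A_{X^{\rm an}}$. The paper needs this in order to apply the rigidity criterion at all, but does not write it down.
\end{itemize}

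\textbf{What the paper's route buys.} It is shorter on the page and slightly more formal. Its Hom computation also shows that, for connected $Y$, every morphism $A_X^\cM\to f_*A_Y^\cM$ is a multiple of the unit, a rigidity statement somewhat stronger than what the splitting requires.

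\textbf{The compatibility you flagged.} Both arguments rely on ${\rm For}$ respecting the adjunction units and the operators $\gamma(g)$, so that the idempotent becomes the topological averaging operator. You are right to single this out, and it does hold in Saito's formalism.
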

\begin{proof} For the first claim, recall \cite{SaitoMixedSheaves}*{Thm. 3.8} (or \cite{SaitoMHM}*{(4.4.2)} if $\cM(-) = {\rm MHM}(-)$) that $A_X^H$ is canonically characterized by the property that ${\rm rat}(A_X^\cM) = A_X$ and $\cH^0 \kappa_*(A_X^{\cM}) \cong  H^0_{\cM}(X)$ in the category $\cM({\rm Spec}(k))$.

For the last statement, we have by adjunction
\[ {\rm Hom}_{D^b(\cM(X))}(A_X^\cM , f_* A_Y^\cM) \cong {\rm Hom}_{D^b(\cM({\rm Spec}(k))}(A^\cM, \kappa_* A_Y^\cM),\]
where $\kappa \colon Y \to {\rm pt}$ is the constant map. But by general $t$-structure considerations, the right hand side is equal to
\[ {\rm Hom}_{\cM({\rm Spec}(k))}(A^\cM, H^0_{\cM}(Y)) \hookrightarrow {\rm Hom}_{A}(A,H^0(Y,A)) = A,\]
and so the morphism $A_X^{\cM} = (f_* A_Y^{\cM})^{G} \to f_* A_Y^{\cM}$ is unique up to constant multiples. So, up to rescaling by a non-zero constant (both morphsims are non-zero), the last claim holds.
\end{proof}

\begin{proof}[Proof of \theoremref{thm-MHMTrace}] We have already argued the case of a finite group quotient. Now, assume $X$ is normal. We have
\[ Z \to Y^n \to Y \to X\]
so that $Z\to X$ is a Galois group quotient (for which we have a trace morphism) and $Y^n$ is the normalization of $Y$. Then the desired splitting comes from the composition
\[ A_X^{\cM} \to f_* A_Y^{\cM} \to f_* A_{Y^n}^{\cM} \to f_* A_Z^{\cM} \to A_X^{\cM}.\]
This completes the proof.
\end{proof}

\section{Finite Descent of Singularity Invariants}\label{sec-descent}
Throughout this section, $k= \C, A = \Q$ and $\cM(-) = {\rm MHM}(-,\Q)$.

We give two approaches to the main descent results \corollaryref{cor-descent}: the first uses injectivity lemmas and the second uses Hodge-Lyubeznik numbers. Aside from the $w$-invariant in the first approach, there is no need to appeal to the trace morphism at the level of mixed Hodge modules.

\begin{proof}[First Proof of \corollaryref{cor-descent}] (1) Assume $c(Y) \geq k$ and let $f\colon Y \to X$ be a finite surjective morphism with $X$ normal.

We use the condition $c(X) \geq k$ if and only if $\mathbb D(\underline{\Omega}_X^p) \in D^{[-\dim X,-\dim X+p]}_{\rm coh}(\cO_X)$ for all $p \leq k$. Let $f \colon Y \to X$ be as in the theorem statement.

By \cite{Kim}, the natural morphism $\underline{\Omega}_X^p \to Rf_* \underline{\Omega}_Y^p$ has a left inverse, so we have a factorization of the identity morphism
\[ \underline{\Omega}_X^p \to Rf_* \underline{\Omega}_Y^p \to \underline{\Omega}_X^p.\]

Dualizing, we have a factorization of the identity
morphism \[ \mathbb D(\underline{\Omega}_X^p) \to Rf_* \mathbb D(\underline{\Omega}_Y^p) \to \mathbb D(\underline{\Omega}_X^p).\]

If $f$ is finite, then $Rf_* = f_*$ is exact. Let $p\leq k$ and let $j> p$. We want to prove that $\cH^{-\dim X+j} \mathbb D(\underline{\Omega}_X^p) =0$.

Apply $\cH^{-\dim X+j} = \cH^{-\dim Y+j}$ to the splitting, yielding
\[\cH^{-\dim X+j} \mathbf D(\underline{\Omega}_X^p) \to Rf_* \cH^{-\dim Y +j} \mathbf D(\underline{\Omega}_Y^p) \to \cH^{-\dim X+j} \mathbf D(\underline{\Omega}_X^p)\]
which shows that the identity morphism on the outer term factors through $0$, which gives the vanishing. 

(2) The proof is by induction on $k$, to apply the injectivity result \theoremref{lem-injLem2}\eqref{lem-injLem2}.  Note that the result of the injectivity statement holds unconditionally for $k=0$.

By the trace map construction, we have the commutative diagram
\[ \begin{tikzcd} \cH^0 \Q_X^H[\dim X] \ar[r] \ar[d] & f_* \cH^0 \Q_Y^H[\dim Y] \ar[r] \ar[d] &\cH^0 \Q_X^H[\dim X]\ar[d] \\ {\rm IC}_X^H \ar[r]& f_* {\rm IC}_Y^H \ar[r] & {\rm IC}_X^H \end{tikzcd},\]
because we can apply ${\rm Gr}^W_w \cH^{\dim X}(-)$ to the trace map for $\Q_X^H$.

Applying ${\rm Gr}^F_{-p}{\rm DR}(-)$, we get a commutative diagram
\[ \begin{tikzcd} {\rm Gr}^F_{-p}{\rm DR}\cH^0 \Q_X^H[\dim X] \ar[r] \ar[d] & f_* {\rm Gr}^F_{-p}{\rm DR}\cH^0 \Q_Y^H[\dim Y] \ar[r] \ar[d] &{\rm Gr}^F_{-p}{\rm DR}\cH^0 \Q_X^H[\dim X]\ar[d] \\ {\rm I}\underline{\Omega}_X^p \ar[r]& f_* {\rm I}\underline{\Omega}_Y^p \ar[r] & {\rm I}\underline{\Omega}_X^p \end{tikzcd}\]
where the horizontal compositions are the identity. By assumption, the middle vertical arrow has a left inverse, so certainly the left vertical arrow has one too, as desired.

(3) By the previous two cases and \eqref{eq-minHRH}, we see that ${\rm HRH}(X)$ descends. We can also prove this directly in the spirit of the previous result.

The proof is by induction on $k$, to apply \theoremref{lem-injLem1}.  Note that the result of the injectivity lemma holds unconditionally for $k=0$.

We have the commutative diagram
\[ \begin{tikzcd}  \underline{\Omega}_X^p \ar[r] \ar[d] & f_* \underline{\Omega}_Y^p \ar[r] \ar[d] & \underline{\Omega}_X^p \ar[d] \\ {\rm I}\underline{\Omega}_X^p \ar[r]& f_* {\rm I}\underline{\Omega}_Y^p \ar[r] & {\rm I}\underline{\Omega}_X^p \end{tikzcd}\]
and by assumption, if $p\leq k$, the middle arrow has a left inverse, so the left vertical arrow does, too.
\end{proof}

Above we used the mixed Hodge module version of Kim's trace morphism \cite{Kim} for the invariant $w(X)$. This tool also immediately gives a descent statement \corollaryref{cor-HL} for Hodge-Lyubeznik numbers.

Using the fact that $W_\bullet f_*(-) = f_* W_\bullet(-)$ (because $f$ is finite), the trace morphism $${\rm Tr}_f \colon f_* \Q_Y^H[\dim Y] \to \Q_X^H[\dim X]$$ induces a commutative diagram of splittings (by taking $\cH^0(-)$, $W_{\dim X-1} \cH^0(-)$ and ${\rm Gr}^W_{\dim X} \cH^0(-)$):

\[ \begin{tikzcd} W_{\dim X-1}\cH^0 \Q_X^H[\dim X] \ar[r] \ar[d] & f_* W_{\dim X-1} \cH^0 \Q_Y^H[\dim Y] \ar[r] \ar[d] &W_{\dim X-1}\cH^0 \Q_X^H[\dim X]\ar[d] \\\cH^0 \Q_X^H[\dim X] \ar[r] \ar[d] & f_* \cH^0 \Q_Y^H[\dim Y] \ar[r] \ar[d] &\cH^0 \Q_X^H[\dim X]\ar[d] \\ {\rm IC}_X^H \ar[r]& f_* {\rm IC}_Y^H \ar[r] & {\rm IC}_X^H \end{tikzcd},\]
and also splittings for any $j$:
\[ \cH^j \Q_X^H[\dim X] \to f_* \cH^j \Q_Y^H[\dim Y] \to \cH^j\Q_X^H[\dim X].\]

Now, let $x\in X$ and consider the Cartesian diagram

\[ \begin{tikzcd} F_x \ar[d] \ar[r] & Y\ar[d,"f"] \\ \{x\} \ar[r] & X \end{tikzcd}.\]

As $f$ is proper, we can use base-change after applying $i_x^* \cH^{s-\dim X}(-)$ to our splittings to get \footnotesize
\[ \begin{tikzcd} \cH^{s-\dim X} i_x^* W_{\dim X-1}\cH^0 \Q_X^H[\dim X] \ar[r] \ar[d] & f_* \cH^{s-\dim X} i_{F_x}^* W_{\dim X-1} \cH^0 \Q_Y^H[\dim Y] \ar[r] \ar[d] &\cH^{s-\dim X} i_x^* W_{\dim X-1}\cH^0 \Q_X^H[\dim X]\ar[d] \\\cH^{s-\dim X} i_x^* \cH^0 \Q_X^H[\dim X] \ar[r] \ar[d] & f_* \cH^{s-\dim X} i_{F_x}^* \cH^0 \Q_Y^H[\dim Y] \ar[r] \ar[d] &\cH^{s-\dim X} i_x^* \cH^0 \Q_X^H[\dim X]\ar[d] \\\cH^{s-\dim X} i_x^*{\rm IC}_X^H \ar[r]& f_* \cH^{s-\dim X} i_x^* {\rm IC}_Y^H \ar[r] &\cH^{s-\dim X}  i_x^*{\rm IC}_X^H \end{tikzcd},\]\normalsize
where the columns are short exact sequences and the rows are factorizations of the identity.

Applying ${\rm Gr}^F_p {\rm Gr}^W_{-p-q}$ and computing dimensions, we can proceed to the proof.
\begin{proof}[Proof of \corollaryref{cor-HL}] As $F_x$ is disconnected, the value of $i_{F_x}^*$ applied to any object is equal to $\bigoplus_{i=1}^j i_{y_i}^*$ applied to the same object.

The discussion above shows that the mixed Hodge structure corresponding to $x\in X$ is a direct summand of the corresponding mixed Hodge  structure for $\{y_1,\dots, y_j\}$, which proves the desired inequalities.    
\end{proof}

This implies the descent statement for the other invariants:

\begin{proof}[Second Proof of \corollaryref{cor-descent}] The claim for $c(-)$ follows easily from the first inequality of \corollaryref{cor-HL} (if all vanishings hold upstairs on $Y$, then they also hold on $X$). The claim for ${\rm HRH}(-)$ follows from the claim for $w(-)$.

For $w(-)$ the claim follows from the very last inequality of \corollaryref{cor-HL}.
\end{proof}

We now proceed to prove \corollaryref{cor-qfact}. Let us first observe the following:

\begin{cor} Assume $X,Y$ are normal projective varieties and let $f\colon Y \to X$ be a finite surjective map.

If $\sigma(Y)$ is finite, then $\sigma(X)$ is finite.
\end{cor}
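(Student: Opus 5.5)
The plan is to avoid Hodge theory and argue directly with Weil divisors, using the pullback $f^*$ and pushforward $f_*$ of divisors along the finite surjective map $f\colon Y\to X$. The aim is an injection
\[ \frac{{\rm Div}_{\Q}(X)}{{\rm CDiv}_{\Q}(X)} \hookrightarrow \frac{{\rm Div}_{\Q}(Y)}{{\rm CDiv}_{\Q}(Y)}, \]
which gives $\sigma(X)\leq \sigma(Y)$. In particular, finiteness of $\sigma(Y)$ forces finiteness of $\sigma(X)$, and the same argument yields \corollaryref{cor-qfact}(1).

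\emph{Step 1: pullback of Weil divisors.} Let $U\subseteq X$ be the smooth locus. Since $X$ is normal, $X\setminus U$ has codimension $\geq 2$. Since $f$ is finite, $f^{-1}(X\setminus U)$ also has codimension $\geq 2$ in $Y$. For a Weil divisor $D$ on $X$, the restriction $D|_U$ is Cartier, so $f^*(D|_U)$ is a Cartier divisor on $f^{-1}(U)$. Let $f^*D$ be its closure in $Y$; this is well defined because $Y$ is normal. This gives a $\Q$-linear map $f^*\colon {\rm Div}_\Q(X)\to {\rm Div}_\Q(Y)$. If $D$ is Cartier, $f^*D$ agrees with the usual Cartier pullback, since both agree off codimension $2$. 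Hence $f^*({\rm CDiv}_\Q(X))\subseteq {\rm CDiv}_\Q(Y)$, and $f^*$ descends to the quotients.

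\emph{Step 2: pushforward and the projection formula.} Define $f_*\colon {\rm Div}(Y)\to{\rm Div}(X)$ on prime divisors by $f_*E=[k(E):k(f(E))]\,f(E)$. The prime divisor $f(E)$ is a divisor because $f$ is finite. The standard local computation at the generic points of prime divisors of $X$, where $\cO_{X,\eta}$ is a DVR and $\cO_Y\otimes\cO_{X,\eta}$ is a semilocal Dedekind ring of rank $\deg f$ over it, gives
\[ f_*f^*D=(\deg f)\,D. \]

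\emph{Step 3 (main obstacle): $f_*$ preserves Cartier divisors.} I expect this step to be the main point. Let $D'$ be Cartier on $Y$ and fix $x\in X$. The fiber $F_x$ is finite, so the semilocal ring $\cO_{Y,F_x}$ has trivial Picard group. Therefore $D'=\mathrm{div}(\varphi)$ on $f^{-1}(V)$ for some neighborhood $V$ of $x$; the preimage of a small affine neighborhood is affine and contains a neighborhood of $F_x$ where $\varphi$ works. Here $Y$ is irreducible, being a variety, so $K(Y)/K(X)$ is a finite field extension and the norm $N(\varphi)\in K(X)$ is defined. The classical identity $f_*\mathrm{div}(\varphi)=\mathrm{div}(N_{K(Y)/K(X)}\varphi)$ holds on $V$, because it can be checked at codimension-one points of the normal variety $V$, i.e. over DVRs. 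Hence $f_*D'$ is principal near $x$. Since $x$ was arbitrary, $f_*D'$ is Cartier.

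\emph{Step 4: conclusion.} Suppose $f^*D\in{\rm CDiv}_\Q(Y)$. By Steps 2 and 3,
\[ (\deg f)\,D=f_*f^*D\in {\rm CDiv}_\Q(X), \]
so $D\in{\rm CDiv}_\Q(X)$. Thus the map induced by $f^*$ on the quotients is injective, which gives $\sigma(X)\leq\sigma(Y)<\infty$.
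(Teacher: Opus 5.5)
Your argument is correct, and it takes a genuinely different route from the paper. The paper argues Hodge-theoretically. The trace splitting of \theoremref{thm-MHMTrace} is compatible with the maps to ${\rm IC}$, so after applying $\cH^1\kappa_*$ it exhibits $H^1(X)\to{\rm IH}^1(X)$ as a retract of $H^1(Y)\to{\rm IH}^1(Y)$. By Park--Popa's criterion \cite{PPFactorial} ($\sigma$ is finite iff $H^1={\rm IH}^1$), the latter map is an isomorphism, and a diagram chase then forces $H^1(X)={\rm IH}^1(X)$. The inequality of \corollaryref{cor-qfact}(1) is proved separately afterwards, via Park--Popa's description of ${\rm Div}_\Q/{\rm CDiv}_\Q$ as ${\rm coker}(\gamma)$.

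You instead run the classical norm argument behind descent of $\Q$-factoriality under finite maps (as in Koll\'ar--Mori). It has three ingredients: $f^*$ defined through the smooth locus, $f_*f^*=\deg f$, and $f_*$ preserving Cartier divisors because $f_*{\rm div}(\varphi)={\rm div}(N_{K(Y)/K(X)}\varphi)$. This is self-contained and needs no Hodge theory, no Park--Popa, and no projectivity: the injection ${\rm Div}_\Q(X)/{\rm CDiv}_\Q(X)\hookrightarrow{\rm Div}_\Q(Y)/{\rm CDiv}_\Q(Y)$ holds for any finite surjective morphism of normal varieties. It also yields \corollaryref{cor-qfact}(1) at once, rather than after first establishing finiteness. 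The paper's route, by contrast, is an application of its trace morphism and gives the additional topological statement $H^1(X)={\rm IH}^1(X)$.

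Two small points. In Step 3 the containment is backwards: you must shrink $V$ so that $f^{-1}(V)$ is \emph{contained in} the neighborhood of $F_x$ on which $D'={\rm div}(\varphi)$. This is possible because $f$ is finite, hence closed. Also, the paper's convention allows reducible varieties. A normal variety is the disjoint union of its irreducible components and $\sigma$ is additive over them, so you reduce to your irreducible case by choosing, for each component of $X$, a component of $Y$ mapping onto it.
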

\begin{proof} We have the compatible splittings
\[ \begin{tikzcd} \Q_X^H \ar[r] \ar[d] & f_* \Q_Y^H \ar[r] \ar[d] & \Q_X^H \ar[d] \\ {\rm IC}_X^H[-\dim X] \ar[r] & f_* {\rm IC}_Y^H[-\dim Y] \ar[r] & {\rm IC}_X^H[-\dim X]\end{tikzcd}, \]
which gives a commutative diagram (after applying $\cH^1 \kappa_*(-)$):
\[ \begin{tikzcd} H^1(X) \ar[r] \ar[d] & H^1(Y) \ar[r] \ar[d] & H^1(X) \ar[d] \\ {\rm IH}^1(X) \ar[r] & {\rm IH}^1(Y) \ar[r] & {\rm IH}^1(X)\end{tikzcd}. \]

By \cite{PPFactorial}*{Thm. A}, we know that $\sigma(Y)$ is finite if and only if $H^1(Y) = {\rm IH}^1(Y)$. 
Thus $H^1(X) \to {\rm IH}^1(X)$ (left vertical map) is injective. 
But then the commutativity of the diagram above implies that $H^1(X) \to {\rm IH}^1(X)$ (right vertical map) is also surjective, proving equality and hence the claim.
\end{proof}


\begin{proof}[Proof of \corollaryref{cor-qfact}(1)] There is nothing to prove if $\sigma(Y)=\infty$ whence we assume $\sigma(Y)<\infty$. Thus, by the previous corollary, we obtain $\sigma(X)<\infty$. By \cite{PPFactorial}*{Pf. of Thm. A}, under the assumption that $\sigma(X)$ is finite, we have an isomorphism
\[ {\rm Div}_\Q(X)/ {\rm CDiv}_\Q(X) \cong {\rm coker}(\gamma_X \colon V_X \to {\rm IH}^2(X,\Q) \cap {\rm IH}^{1,1}(X)),\]
where $V_X = \ker(H^2(X,\Q) \to H^2(X,\cO_X))$ and the morphism $\gamma_X$ is induced by
\[ H^2(X,\Q) \to {\rm IH}^2(X,\Q).\]

Once again, we have the compatible splittings
\[ \begin{tikzcd} \Q_X^H \ar[r] \ar[d] & f_* \Q_Y^H \ar[r] \ar[d] & \Q_X^H \ar[d] \\ {\rm IC}_X^H[-\dim X] \ar[r] & f_* {\rm IC}_Y^H[-\dim Y] \ar[r] & {\rm IC}_X^H[-\dim X]\end{tikzcd}, \]
inducing
\[ \begin{tikzcd} H^2(X) \ar[r] \ar[d] & H^2(Y) \ar[r] \ar[d] & H^2(X) \ar[d] \\ {\rm IH}^2(X) \ar[r] & {\rm IH}^2(Y) \ar[r] & {\rm IH}^2(X)\end{tikzcd}. \]

The corollary follows from the observation that $V_X = H^2(X,\Q) \cap V_Y$ as subspaces of $H^2(Y)$. Indeed, we have the commutative diagram
\[ \begin{tikzcd} H^2(X,\Q) \ar[r] \ar[d] & H^2(X,\cO_X) \ar[d] \\ H^2(Y,\Q) \ar[r] & H^2(Y,\cO_Y) \end{tikzcd}\]
where the two vertical morphisms are split injections. Hence, if $v$ in the top left corner maps to $0$ in $H^2(Y,\cO_Y)$, then its image under the top vertical morphism must also be zero.

So we see that ${\rm coker}(\gamma_X)$ is a direct summand of ${\rm coker}(\gamma_Y)$, proving the desired inequality.
\end{proof}

Our final result concerns the analytic $\Q$-factoriality defect. This argument only uses the decomposition theorem and not our trace map, but we include it for completeness.

\begin{cor} Let $f\colon Y \to X$ be a finite surjective morphism between normal projective varieties. Let $x\in X$ and consider the analytic germ $(X,x)$. Also consider the analytic germ $(Y,F_x)$ where $F_x = \{y_1,\dots, y_j\}$ is the fiber over $x$, with induced map $f\colon (Y,F_x) \to (X,x)$.

If $\sigma^{\rm an}(Y,y_i)$ is finite for all $1\leq i\leq j$, then $\sigma^{\rm an}(X,x)$ is finite.

\end{cor}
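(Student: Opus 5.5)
The plan is to mimic the global argument of the previous corollary, replacing global cohomology $H^1(X), {\rm IH}^1(X)$ by local cohomology at $x$. Recall from \cite{PPFactorial} the local analytic analogue of their Theorem A: for a normal germ $(X,x)$, $\sigma^{\rm an}(X,x)$ is finite if and only if the natural map $\cH^1 i_x^* \Q_X \to \cH^{1-\dim X} i_x^*{\rm IC}_X$ is an isomorphism, equivalently $H^1$ of the link of $x$ agrees with the corresponding local intersection cohomology. Normality gives $H^1$ of the link injecting into local ${\rm IH}^1$, so finiteness amounts to surjectivity. I take this criterion as the input, stated with stalks at $x$ for $X$ and at each $y_i$ for $Y$.

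Next I produce compatible splittings at the level of constructible complexes. Since the statement says only the decomposition theorem is needed, I avoid the mixed Hodge trace. For the finite map $f$ with $X$ normal, $f_*\Q_Y[\dim Y]$ is perverse, and $f_*{\rm IC}_Y$ is semisimple perverse by the decomposition theorem. Over the open set where $f$ is étale and both varieties are smooth, it contains the trivial local system as a summand via the trace. Hence ${\rm IC}_X$ is a direct summand of $f_*{\rm IC}_Y$, and the summand inclusion ${\rm IC}_X \to f_*{\rm IC}_Y$ can be taken to be the adjunction map.

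For $\Q_X$, I use the topological trace. For $X$ normal, $f_*\Q_Y \to \Q_X$ is given on stalks by summing over preimages with multiplicities, which is well defined because $X$ is normal, hence unibranch. Dividing by $\deg f$ splits $\Q_X\to f_*\Q_Y$. Both splittings restrict compatibly to the generic locus. Since ${\rm Hom}(f_*\Q_Y, {\rm IC}_X[-\dim X])$ is controlled by the generic stalk, as ${\rm IC}$ has no sections supported on proper closed subsets in degree $0$, the square with the maps $\Q\to{\rm IC}[-\dim]$ commutes.

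Applying $\cH^1 i_x^*$ and proper base change, $i_x^* f_* = \bigoplus_i i_{y_i}^*$. This gives a commutative diagram
\[ \cH^1 i_x^*\Q_X \to \bigoplus_i \cH^1 i_{y_i}^*\Q_Y \to \cH^1 i_x^*\Q_X \]
over the analogous row for ${\rm IC}$, with both rows composing to the identity. By hypothesis the middle vertical map is an isomorphism. The argument of the previous corollary then shows the outer vertical map is surjective. Combined with injectivity, it is an isomorphism, so $\sigma^{\rm an}(X,x)<\infty$.

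The main obstacle is the commutativity of the ${\rm IC}$ row with the $\Q$ row, that is, choosing the retraction $f_*{\rm IC}_Y\to{\rm IC}_X$ compatibly with the trace on $\Q$. I expect to handle it by the generic-determination argument above: morphisms into ${\rm IC}_X$ from a perverse object whose $\cH^0$ quotient is relevant are determined on a dense open set. I also need to confirm the precise local criterion in \cite{PPFactorial}, including whether it is phrased via the link.
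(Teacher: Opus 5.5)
There is a genuine gap: the local criterion you take as input is not correct. As a result, your argument descends the wrong property.

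First, $i_x^*\Q_X$ is the stalk of the constant sheaf, i.e.\ the cohomology of a contractible neighborhood of $x$. So $\cH^1 i_x^*\Q_X=0$; it is not the cohomology of the link $L$. Read literally, your criterion says $\sigma^{\rm an}(X,x)<\infty$ if and only if $\cH^{1-\dim X}({\rm IC}_X)_x=0$. At an isolated normal surface singularity this means $H^1(L,\Q)=0$.

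This fails for the germ $\{x^2+y^3+z^7=0\}$. Its link is the Brieskorn homology sphere $\Sigma(2,3,7)$, so $H^1(L,\Q)=0$, but $p_g=1$. Concretely, take a resolution $\mu$ and set $\widetilde U=\mu^{-1}(U)$ over a small Stein neighborhood $U$.
\begin{itemize}
\item The exceptional set is a tree of rational curves, so $H^1(\widetilde U,\Z)=0$.
\item The exponential sequence then embeds $H^1(\widetilde U,\cO_{\widetilde U})\cong (R^1\mu_*\cO_{\widetilde X})_x\cong\C$ into ${\rm Pic}(\widetilde U)$ as the kernel of $c_1$.
\item This kernel meets the span of the exceptional curves trivially, by negative definiteness of the intersection matrix.
\end{itemize}
Hence Weil divisor classes of the germ modulo Cartier ones contain a copy of $\C$, and $\sigma^{\rm an}(X,x)=\infty$.

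If you instead meant link cohomology versus local intersection cohomology, these coincide tautologically at an isolated singular point. The criterion would then wrongly predict finiteness for the cone over an elliptic curve.

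The descent step itself (a direct summand of a vanishing group vanishes) is fine. The problem is that finiteness of $\sigma^{\rm an}$ is controlled by the coherent sheaf $R^1\mu_*\cO_{\widetilde X}$. This is a Hodge-filtration-level invariant, and $\cH^{1-\dim X}({\rm IC}_X)_x$ does not see it.

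The missing ingredient is the correct input, \cite{PPFactorial}*{Thm. B}: $\sigma^{\rm an}(X,x)<\infty$ if and only if $(R^1\mu_*\cO_{\widetilde X})_x=0$. To descend this, you need the following steps.
\begin{itemize}
\item Obtain the splitting ${\rm IC}_X\subseteq f_*{\rm IC}_Y$ in ${\rm MHM}(X)$, not just for perverse sheaves. Your argument works verbatim there, since $f_*{\rm IC}_Y^H$ is pure, hence semisimple.
\item Apply ${\rm Gr}^F_0{\rm DR}$, which commutes with $f_*$ because $f$ is proper.
\item Use ${\rm Gr}^F_0{\rm DR}({\rm IC}^H_X)\cong R\mu_*\cO_{\widetilde X}[\dim X]$ \cite{KebekusSchnell}*{Prop. 8.2}.
\end{itemize}
This makes $R\mu_*\cO_{\widetilde X}$ a direct summand of $f_*R\mu_*\cO_{\widetilde Y}$. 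Taking $\cH^1$ and stalks at $x$, using that $f$ is finite, gives $(R^1\mu_*\cO_{\widetilde X})_x\hookrightarrow\bigoplus_i (R^1\mu_*\cO_{\widetilde Y})_{y_i}=0$.

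This is the paper's argument. Note that neither the trace on $\Q_X$ nor the compatibility you flagged as the main obstacle is needed.
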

\begin{proof} By \cite{PPFactorial}*{Thm. B}, the analytic defect is finite if and only if $R^1 \mu_* \cO_{\widetilde{X}} = 0$ for some resolution $\mu \colon \widetilde{X} \to X$. We also let $\mu \colon \widetilde{Y} \to Y$ denote a resolution of $Y$.

By \cite{KebekusSchnell}*{Prop. 8.2}, we know that 
\[{\rm Gr}^F_{0} {\rm DR}_Y({\rm IC}_Y^H) = R\pi_* \cO_{\widetilde{Y}}[\dim Y] \]
\[{\rm Gr}^F_{0} {\rm DR}_X({\rm IC}_X^H) = R\pi_* \cO_{\widetilde{X}}[\dim X]\]
and so, because ${\rm IC}_X^H$ is a direct summand of $f_*{\rm IC}_Y^H$, we see that
\[ R\pi_* \cO_{\widetilde{X}} \text{ is a direct summand of } f_* R\pi_* \cO_{\widetilde{Y}}.\]

Thus, we easily see (by exactness of $f_*$) that if $\sigma^{\rm an}(Y,y)$ is finite, then $\sigma^{\rm an}(X,x)$ is finite, too.
\end{proof}

\begin{proof}[Proof of \corollaryref{cor-qfact}(2)] As before, we are safe to assume $\sigma^{\rm an}(Y,y_i)$ is finite for all $1\leq i\leq j$, and consequently $\sigma^{\rm an}(X,x)<\infty$.

By \cite{PPFactorial}*{Thm. B}, we want to study the morphism
\[ {\rm DR}_X({\rm IC}_X) \to {\rm Gr}^F_0 {\rm DR}_X({\rm IC}_X^H).\]

As our splitting holds at the level of Hodge modules, we have a commutative diagram (whose vertical morphisms are split inclusions):
\[ \begin{tikzcd}{\rm DR}_X({\rm IC}_X) \ar[r] \ar[d] & {\rm Gr}^F_0 {\rm DR}_X({\rm IC}_X^H) \ar[d] \\ f_* {\rm DR}_Y({\rm IC}_Y) \ar[r] & f_* {\rm Gr}^F_0 {\rm DR}_Y({\rm IC}_Y^H) \end{tikzcd}.\]

As the vertical maps are split inclusions, they remain so after applying $\cH^{-\dim X +2}(-)_x$ (taking the cohomology and then the stalk at $x\in X$). Hence, we conclude by exactness of $f$ that we have a commutative diagram with split injective vertical morphisms
\[ \begin{tikzcd} (\cH^{-\dim X +2}{\rm DR}_X({\rm IC}_X))_x \ar[r] \ar[d] & (R^2 \pi_* \cO_{\widetilde{X}})_{x} \ar[d] \\ \bigoplus_{i=1}^j (\cH^{-\dim Y+2}{\rm DR}_Y({\rm IC}_Y))_{y_i} \ar[r] & \bigoplus_{i=1}^j (R^2\pi_* \cO_{\widetilde{Y}})_{y_i} \end{tikzcd},\]
which immediately implies the desired inequality.
\end{proof}

\bibliography{bib}

\begin{bibdiv}
\begin{biblist}

\bib{BS}{article}{
      author={Balmer, Paul},
      author={Schlichting, Marco},
       title={Idempotent completion of triangulated categories},
        date={2001},
     journal={J. Algebra},
      volume={236},
      number={2},
       pages={819\ndash 834},
}

\bib{CDOIsolated}{article}{
      author={{Chen}, Qianyu},
      author={{Dirks}, Bradley},
      author={{Olano}, Sebastian},
       title={{Partial Cohomologically Complete Intersections via Hodge
  Theory}},
        date={2025-11},
     journal={arXiv e-prints},
       pages={arXiv:2511.03042},
      eprint={2511.03042},
}

\bib{CDOR}{article}{
      author={{Chen}, Qianyu},
      author={{Dirks}, Bradley},
      author={{Olano}, Sebastian},
      author={{Raychaudhury}, Debaditya},
       title={{Hodge theory of secant varieties}},
        date={2026-02},
     journal={arXiv e-prints},
       pages={arXiv:2602.04038},
      eprint={2602.04038},
}

\bib{EquivCharClass}{article}{
      author={Cappell, Sylvain~E.},
      author={Maxim, Laurentiu~G.},
      author={Schürmann, Jörg},
      author={Shaneson, Julius~L.},
       title={Equivariant characteristic classes of singular complex algebraic
  varieties},
        date={2012},
     journal={Comm. Pure Appl. Math.},
      volume={65},
      number={12},
       pages={1722\ndash 1769},
         url={https://onlinelibrary.wiley.com/doi/abs/10.1002/cpa.21427},
}

\bib{DuBois}{article}{
      author={du~Bois, Philippe},
       title={Complexe de de {Rham} {filtr\'{e}} {d'une} {vari\'{e}t\'{e}}
  {singuli\`{e}re}},
        date={1981},
     journal={Bull. Soc. Math. France},
      volume={109},
       pages={41\ndash 81},
}

\bib{DOR}{article}{
      author={{Dirks}, Bradley},
      author={{Olano}, Sebastian},
      author={{Raychaudhury}, Debaditya},
       title={{A Hodge Theoretic generalization of $\mathbb{Q}$-Homology
  Manifolds}},
        date={2025-01},
     journal={arXiv e-prints},
       pages={arXiv:2501.14065},
      eprint={2501.14065},
}

\bib{HodgeLyubeznik}{article}{
      author={Garc\'ia~L\'opez, Ricardo},
      author={Sabbah, Claude},
       title={Hodge-{L}yubeznik numbers},
        date={2025},
        ISSN={1631-073X,1778-3569},
     journal={C. R. Math. Acad. Sci. Paris},
      volume={363},
       pages={213\ndash 221},
         url={https://doi.org/10.5802/crmath.724},
      review={\MR{4884743}},
}

\bib{JKSY}{article}{
      author={Jung, Seung-Jo},
      author={Kim, In-Kyun},
      author={Saito, Morihiko},
      author={Yoon, Youngho},
       title={Higher {Du} {Bois} singularities of hypersurfaces},
        date={2022},
     journal={Proc. Lond. Math. Soc.},
      volume={125},
      number={3},
       pages={543\ndash 567},
}

\bib{Kaw}{article}{
      author={Kawamata, Yujiro},
       title={Crepant blowing-up of {$3$}-dimensional canonical singularities
  and its application to degenerations of surfaces},
        date={1988},
        ISSN={0003-486X,1939-8980},
     journal={Ann. of Math. (2)},
      volume={127},
      number={1},
       pages={93\ndash 163},
         url={https://doi.org/10.2307/1971417},
      review={\MR{924674}},
}

\bib{Kim}{article}{
      author={{Kim}, Hyunsuk},
       title={{Trace for the Du Bois complex}},
        date={2025-07},
     journal={arXiv e-prints},
       pages={arXiv:2507.07350},
      eprint={2507.07350},
}

\bib{KK}{article}{
      author={Koll\'ar, J\'anos},
      author={Kov\'acs, S\'andor~J.},
       title={Log canonical singularities are {D}u {B}ois},
        date={2010},
        ISSN={0894-0347,1088-6834},
     journal={J. Amer. Math. Soc.},
      volume={23},
      number={3},
       pages={791\ndash 813},
         url={https://doi.org/10.1090/S0894-0347-10-00663-6},
      review={\MR{2629988}},
}

\bib{KLV}{article}{
      author={{Kov{\'a}cs}, S{\'a}ndor},
      author={{Lank}, Pat},
      author={{Venkatesh}, Sridhar},
       title={{Simple criteria for higher rational singularities}},
        date={2025-07},
     journal={arXiv e-prints},
       pages={arXiv:2507.07351},
      eprint={2507.07351},
}

\bib{Kov}{article}{
      author={{Kov{\'a}cs}, S{\'a}ndor},
       title={{Complexes of differential forms and singularities: The
  injectivity theorem}},
        date={2025-05},
     journal={arXiv e-prints},
       pages={arXiv:2505.09912},
      eprint={2505.09912},
}

\bib{Kovacs}{article}{
      author={Kov\'{a}cs, S\'{a}ndor},
       title={Rational, log canonical, {D}u {B}ois singularities: on the
  conjectures of {K}oll\'{a}r and {S}teenbrink},
        date={1999},
     journal={Compos. Math.},
      volume={118},
      number={2},
       pages={123\ndash 133},
}

\bib{KS}{incollection}{
      author={Kov\'acs, S\'andor~J.},
      author={Schwede, Karl},
       title={Du {B}ois singularities deform},
        date={2016},
   booktitle={Minimal models and extremal rays ({K}yoto, 2011)},
      series={Adv. Stud. Pure Math.},
      volume={70},
   publisher={Math. Soc. Japan, [Tokyo]},
       pages={49\ndash 65},
         url={https://doi.org/10.2969/aspm/07010049},
      review={\MR{3617778}},
}

\bib{KebekusSchnell}{article}{
      author={Kebekus, Stefan},
      author={Schnell, Christian},
       title={Extending holomorphic forms from the regular locus of a complex
  space to a resolution of singularities},
        date={2021},
     journal={J. Amer. Math. Soc.},
      volume={34},
      number={2},
       pages={315\ndash 368},
}

\bib{LC}{article}{
      author={Le, Jue},
      author={Chen, Xiao-Wu},
       title={Karoubianness of a triangulated category},
        date={2007},
        ISSN={0021-8693},
     journal={J. Algebra},
      volume={310},
      number={1},
       pages={452\ndash 457},
  url={https://www.sciencedirect.com/science/article/pii/S0021869306008076},
}

\bib{MOPW}{article}{
      author={Musta\c{t}\u{a}, Mircea},
      author={Olano, Sebasti\'{a}n},
      author={Popa, Mihnea},
      author={Witaszek, Jakub},
       title={The {Du} {Bois} complex of a hypersurface and the minimal
  exponent},
        date={2023},
     journal={Duke Math. J.},
      volume={173},
      number={2},
       pages={1411–\ndash 1436},
}

\bib{MPLocCoh}{article}{
      author={Musta\c{t}\u{a}, Mircea},
      author={Popa, Mihnea},
       title={Hodge filtration on local cohomology, {D}u {B}ois complex and
  local cohomological dimension},
        date={2022},
     journal={Forum Math. Pi},
      volume={10},
       pages={Paper No. e22, 58},
  url={https://doi-org.myaccess.library.utoronto.ca/10.1017/fmp.2022.15},
      review={\MR{4491455}},
}

\bib{SymmPowerHodge}{article}{
      author={Maxim, Lauren\c{t}iu},
      author={Saito, Morihiko},
      author={Sch\"{u}rmann, J\"{o}rg},
       title={Symmetric products of mixed {H}odge modules},
        date={2011},
     journal={J. Math. Pures Appl.},
      volume={96},
      number={5},
       pages={462\ndash 483},
}

\bib{PPLefschetz}{article}{
      author={{Park}, Sung~Gi},
      author={{Popa}, Mihnea},
       title={{Hodge symmetry and Lefschetz theorems for singular varieties}},
        date={2025-08},
     journal={arXiv e-prints},
       pages={arXiv:2410.15638},
      eprint={2410.15638},
}

\bib{PPFactorial}{article}{
      author={{Park}, Sung~Gi},
      author={{Popa}, Mihnea},
       title={{Q-factoriality and Hodge-Du Bois theory}},
        date={2025-08},
     journal={arXiv e-prints},
       pages={arXiv:2508.17748},
      eprint={2508.17748},
}

\bib{PSV}{article}{
      author={{Popa}, Mihnea},
      author={{Shen}, Wanchun},
      author={{Vo}, Anh~Duc},
       title={{Injectivity and Vanishing for the Du Bois Complexes of Isolated
  Singularities}},
        date={2024-09},
     journal={arXiv e-prints},
       pages={arXiv:2409.18019},
      eprint={2409.18019},
}

\bib{SaitoMHC}{article}{
      author={Saito, Morihiko},
       title={Mixed {H}odge complexes on algebraic varieties},
        date={2000},
     journal={Math. Ann.},
      volume={316},
      number={2},
       pages={283\ndash 331},
}

\bib{SaitoMHM}{article}{
      author={Saito, Morihiko},
       title={Mixed {H}odge modules},
        date={1990},
        ISSN={0034-5318},
     journal={Publ. Res. Inst. Math. Sci.},
      volume={26},
      number={2},
       pages={221\ndash 333},
         url={https://doi.org/10.2977/prims/1195171082},
      review={\MR{1047415}},
}

\bib{SaitoMixedSheaves}{article}{
      author={Saito, Morihiko},
       title={On the formalisme of mixed sheaves},
        date={1991},
     journal={RIMS Preprint},
      volume={784},
}

\bib{SVV}{article}{
      author={{Shen}, Wanchun},
      author={{Venkatesh}, Sridhar},
      author={{Vo}, Anh~Duc},
       title={{On $k$-Du Bois and $k$-rational singularities}},
        date={2023-06},
     journal={arXiv e-prints},
       pages={arXiv:2306.03977},
      eprint={2306.03977},
        note={to appear in Ann. Inst. Fourier},
}

\bib{ADV}{article}{
      author={Vo, Anh~Duc},
       title={{Du Bois complexes and singularity theory}},
     journal={Ph.D. Dissertation, Harvard University, Cambridge, MA, 2025},
}

\end{biblist}
\end{bibdiv}
\bibliographystyle{abbrv}

\end{document}